\theoremstyle{plain}
\newtheorem{theorem}{Theorem}[section]
\newtheorem{lemma}[theorem]{Lemma}
\newtheorem{prop}[theorem]{Proposition}
\newtheorem{theoremx}{Theorem}
\newcommand{\X}{{\rm X}}
\theoremstyle{definition}
\newtheorem{definition}[theorem]{Definition}
\DeclareMathOperator\rr{\mathbb{R}}
\DeclareMathOperator\eps{\varepsilon}
\numberwithin{equation}{section}
\newcommand{\sfd}{{\sf d}}
\renewcommand{\a}{{\sf a}}
\renewcommand{\b}{{\sf b}}
\newcommand{\nn}{\mathbb{N}}
\title{A remark on two notions of flatness for sets in the Euclidean space}
\begin{document}

\author{Ivan Yuri Violo
	\thanks{SISSA, iviolo@sissa.it}}

\maketitle		

\begin{abstract}
	In this  note we compare two ways of measuring the $n$-dimensional ``flatness" of a set $S\subset \rr^d$, where $n\in \mathbb{N}$ and $d>n$. The first one is to consider the classical  Reifenberg-flat numbers $\alpha(x,r)$ ($x \in S$, $r>0$), which measure the minimal scaling-invariant Hausdorff distances  in $B_r(x)$ between $S$  and $n$-dimensional affine subspaces of $\rr^d$. The second is an `intrinsic' approach in which we  view the same set $S$ as a metric space (endowed with the induced Euclidean distance). Then we consider numbers $\a(x,r)$'s, that are the scaling-invariant  Gromov-Hausdorff distances between  balls centered at $x$ of radius $r$ in $S$ and the $n$-dimensional Euclidean ball of the same radius. 
	
	As main result of our analysis we make rigorous a phenomenon, first noted by David  and Toro, for which the numbers $\a(x,r)$'s behaves as the square of the numbers $\alpha(x,r)$'s. Moreover we show how this result finds application in extending the Cheeger-Colding intrinsic-Reifenberg theorem to the biLipschitz case.
	
	As a by-product of our arguments, we deduce  analogous results also for the Jones' numbers $\beta$'s (i.e.  the one-sided version of the numbers $\alpha$'s). 

\end{abstract}

\tableofcontents

\section{Introduction and main results}
In this short note we consider two ways of measuring the ``flatness" of a set in the Euclidean space. The first one is by considering its best approximation by affine planes: more precisely, given a set $S\subset  \rr^d$ and $n \in \mathbb{N}$, with $n < d$,  one defines
\begin{equation}\label{eq:alfadef}
	\alpha(x,r)\coloneqq r^{-1}\inf_{\Gamma}  \sfd_H(S\cap B_{r}(x),\Gamma\cap B_{r}(x)), \quad \text{for every $r>0$ and $x \in S$},\\
\end{equation}
where  $ \sfd_H$ is the Hausdorff distance and where the infimum is  taken among all the  $n$-dimensional affine planes $\Gamma$ containing $x$.

The second more metric-oriented approach is to use the \emph{Gromov-Hausdorff distance}, in particular given $S\subset  \rr^d$ and $n \in \mathbb{N}$, with $n < d$, we set
\[
	\a(x,r)\coloneqq r^{-1}\sfd_{GH}(B^{(S,\sfd_{Eucl})}_r(x),B^{\rr^n}_r(0)), \quad \text{for every  $r>0$ and $x \in  S,$ }
\]
where $(S,\sfd_{Eucl})$ is the metric space obtained by endowing $S$ with the  Euclidean distance. It follows  immediately from the definition of $\sfd_{GH}$ that
\begin{equation}\label{eq:trivial bound}
	\a(x,r)\le\alpha(x,r), \quad \text{for every  $r>0$ and $x \in  S$. }
\end{equation}
Moreover it is easy to build many examples for which
\[
\alpha(x,r)\le 4	\a(x,r),
\]
 with $	\a(x,r)\neq 0$ and arbitrary small (one can take $S$ to be a segment with a very small interval removed from its center). This shows that in general \eqref{eq:trivial bound} cannot be improved and leads to the intuition the the quantities $\alpha(x,r)$ and $\a(x,r)$ are in some sense equivalent. However there are  non-trivial cases in which the  stronger inequality
\[
\a(x,r)\le 100\alpha(x,r)^2
\]
holds. The key example  is the one of a very thin triangle: let $P,Q \in \rr^2$ be the two points in the upper half plane that are at distance 1 from the origin and at distance $\eps\in(0,1/2)$ from the $x$-axis and let $S \subset  \rr^2$  be the union of the two closed segments joining P and Q to the origin $O$. It can be immediately seen that $\alpha(O,1) \ge \eps$, while  projecting $S$ orthogonally onto the $x$-axis easily shows that $\a(O,r) \le 4\eps^2$.  

The aim of this note is to explore  the above phenomenon, that is to clarify to which extent and in which cases the quantities $\a(x,r)$ behave like the square of the quantities $\alpha(x,r).$ 

To state our main result we need the following notation: fixed $\eps\in(0,1/2)$, $S \subset \rr^d$ and $n \in \mathbb{N}$ with $n<d$, for every $i \in \mathbb{Z}$  we set
\begin{equation}\label{def:ai}
\begin{split}
	\alpha_i\coloneqq\sup_{x \in S\cap B_1(0)}\alpha(x,2^{-i}),\quad \a_i\coloneqq\sup_{x \in S\cap B_{1-\eps}(0)}\a(x,2^{-i}),
\end{split}
\end{equation}
where we neglected the dependence on $\eps$, $n$ and $S$. Our main result reads as follows:
\begin{theoremx}\label{thm:mainA}
	For every $n \in \mathbb{N}$ there exists $\delta(n)>0$ such that the following holds. Let $S\subset \rr^d$, with $d>n$,  $\eps\in(0,1/2)$ and define the numbers $\alpha_i,\a_i$ as in \eqref{def:ai}. Suppose that $\alpha_i\le \delta$ for every $i \ge \bar i-2$, for some $\bar i \in \nn$ with $2^{-\bar i}<\eps$, then
	\begin{equation}\label{eq:mainA precise}
		\sum_{i \ge \bar i} \a_i^{\lambda} <C_\lambda \sum_{i \ge \bar i-2} \alpha_i^{2\lambda}, \quad \forall \lambda>0,
	\end{equation}
where $C_\lambda$ is a positive constant depending only on $\lambda$ and $n$. In particular for every $\lambda>0$ it holds that
	\begin{equation}\label{eq:mainA}
	 \sum_{i \ge 0} \alpha_i^{2\lambda} < +\infty\Longrightarrow \sum_{i \ge 0} \a_i^\lambda < +\infty.
	\end{equation}
\end{theoremx}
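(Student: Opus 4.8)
The plan is to reduce \eqref{eq:mainA precise} to a \emph{single-scale} estimate with geometric decay in the scale: namely, that under the hypotheses there are $\sigma=\sigma(n)>0$ and $C=C(n)$ such that for every $x\in S\cap B_{1-\eps}(0)$ and every $i\ge\bar i$,
\begin{equation}\label{eq:ss-plan}
	\a(x,2^{-i})\le C\sum_{k\ge i-2}2^{-\sigma(k-i)}\,\alpha_k^{2}.
\end{equation}
Granting \eqref{eq:ss-plan}, one takes the supremum over $x$ to get the same bound for $\a_i$, and then \eqref{eq:mainA precise} follows by a standard discrete Fubini argument: for $0<\lambda\le1$ the subadditivity of $t\mapsto t^{\lambda}$ gives $\a_i^{\lambda}\le C^{\lambda}\sum_{k\ge i-2}2^{-\sigma\lambda(k-i)}\alpha_k^{2\lambda}$, after which summing in $i$ and exchanging the order of summation multiplies each $\alpha_k^{2\lambda}$ by the convergent series $\sum_{i\le k+2}2^{-\sigma\lambda(k-i)}$; for $\lambda>1$ one first applies Jensen's inequality with respect to the probability weights proportional to $2^{-\sigma(k-i)}$ and then argues as before. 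The implication \eqref{eq:mainA} is the special case of \eqref{eq:mainA precise} in which the right-hand side is finite.

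For \eqref{eq:ss-plan} I would work at a fixed $x$ and run the classical Reifenberg construction. Since $\alpha(y,2^{-j})\le\alpha_j\le\delta$ for every $j\ge\bar i-2$ and every relevant $y$, it yields for each $j\ge i-1$ an $n$-plane $\ell_j=\ell_j(x)$ through (a point near) $x$ which is $C\alpha_j2^{-j}$-close to $S$ in $B_{2^{-j}}(x)$ \emph{and} such that $S$ is $C\alpha_j2^{-j}$-close to $\ell_j$ there; here the two-sidedness built into $\alpha$ (as opposed to the one-sided $\beta$) is essential. Consecutive planes are then $C(\alpha_j+\alpha_{j+1})2^{-j}$-close in $B_{2^{-j}}(x)$, so that the tilt $\Theta_{i,k}\coloneqq\angle(\ell_i,\ell_k)$ obeys $\Theta_{i,k}\le C\sum_{m=i-2}^{k}\alpha_m$ for $k\ge i$. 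As the correspondence realizing $\a(x,2^{-i})$ I would take the orthogonal projection $\pi$ onto $\ell_i$, identifying $\ell_i$ isometrically with $\rr^n$ so that $x\mapsto 0$; two-sidedness guarantees that $\pi$ maps $B^{(S,\sfd_{Eucl})}_{2^{-i}}(x)$ onto a set containing $B^{\rr^n}_{(1-C\delta)2^{-i}}(0)$, so up to a relative error $\le C\delta\le C\alpha_i$ (absorbed, once $\delta$ is small, into \eqref{eq:ss-plan}) this is a genuine correspondence.

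Its distortion I would bound dyadically. Given $p,q\in B^{(S,\sfd_{Eucl})}_{2^{-i}}(x)$ write $p-q=v+w$ with $v\in\ell_i$ and $w\perp\ell_i$, and let $2^{-k}\sim|p-q|$ (so $k\ge i-1$). The flatness of $S$ at scale $2^{-k}$ near $p$, combined with the tilt bound, yields $|w|\le C(\alpha_k+\Theta_{i,k})\,|p-q|$; in the range $k\le i+c(n)$ the right-hand side is $\le\tfrac12|p-q|$, hence $|v|\ge\tfrac12|p-q|$, and the Pythagorean identity gives
\[
	0\le |p-q|-|\pi(p)-\pi(q)|=\frac{|w|^{2}}{|p-q|+|v|}\le C\,(\alpha_k+\Theta_{i,k})^{2}\,2^{-k}.
\]
This is the step where ``$\alpha$'' turns into ``$\alpha^{2}$'': the normal deviation enters the distortion \emph{quadratically}. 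Dividing by the ball radius $2^{-i}$, using Cauchy--Schwarz with geometric weights to bound $\Theta_{i,k}^{2}\le C\sum_{m=i-2}^{k}2^{\sigma(m-i)}\alpha_m^{2}$, and taking the supremum over $k$, the contributions assemble into the geometrically decaying sum in \eqref{eq:ss-plan}.

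\textbf{The main obstacle} lies in the range of very fine scales $k>i+c(n)$, where the accumulated tilt $\Theta_{i,k}$ need not be small, since the hypothesis only provides $\alpha_m\le\delta$ for each $m$ and not $\sum_m\alpha_m<\infty$: there the projection $\pi$ may cease to be faithful at scale $2^{-k}$ and the Pythagorean denominator can collapse. I expect to handle this by a self-improvement (fixed-point) argument: after normalization, the contribution of pairs at a fine scale $2^{-k}$ to the distortion of the scale-$2^{-i}$ correspondence is damped by the factor $2^{-(k-i)}$, while it is controlled by the scale-$2^{-k}$ flatness data; starting from the crude a priori bound $\a(\cdot,2^{-k})\le\alpha_k\le\delta$ supplied by \eqref{eq:trivial bound} and feeding the estimate \eqref{eq:ss-plan} back in at the finer scales, one closes the bound because the geometric damping beats the at most linear growth of the tilt in $k-i$. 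A secondary nuisance is the bookkeeping of the planes $\ell_j(x)$ near the top scale $2^{-i}$ and near the cut-off $\bar i$ — the source of the two extra scales $\bar i-1,\bar i-2$ in \eqref{eq:mainA precise} — together with the quantitative surjectivity of $\pi$; both are routine but must be carried out without spoiling the quadratic gain. Finally, the analogues for the one-sided Jones numbers $\beta$ follow the same scheme with ``orthogonal projection onto $\ell_i$'' replaced throughout by a Reifenberg--David--Toro parametrization (one-sidedness no longer forces $S$ to be dense in the approximating planes), the quadratic mechanism being unchanged.
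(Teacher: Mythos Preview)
Your overall strategy—project onto the top-scale plane $\ell_i$, control the distortion for pairs at distance $\sim 2^{-k}$ via Pythagoras and the telescoped tilt $\Theta_{i,k}$, then Fubini—is exactly the paper's. There is, however, a genuine gap in the surjectivity step, and the large-tilt regime is handled more cleanly in the paper.

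\textbf{Surjectivity loses the square.} Two-sided flatness at the single scale $2^{-i}$ only shows that $\pi(S\cap B_{2^{-i}}(x))$ is $C\alpha_i 2^{-i}$-\emph{dense} in $\ell_i\cap B_{2^{-i}}(x)$; it does not show that the image contains a ball of radius $(1-C\delta)2^{-i}$. Either way the surjectivity contribution to $\a(x,2^{-i})$ is $\sim\alpha_i$, linear and not quadratic, and this cannot be ``absorbed'' into $C\sum_{k}2^{-\sigma(k-i)}\alpha_k^{2}$: if $\alpha_i=\delta$ and $\alpha_k=0$ for $k\neq i$, the right side is $\sim\delta^2$ while your surjectivity deficit is $\sim\delta$. (The inequality $C\delta\le C\alpha_i$ is also backwards.) The paper obtains $C\alpha_i^{2}$-surjectivity by an honest iteration that you are calling routine but is not: given a target $p\in\ell_i$ not yet hit within $\theta'\coloneqq\max(\theta,C\alpha_i^{2})$ and a current nearest $y\in S\cap B_{2^{-i}}(x)$ with $R\coloneqq|\pi(y)-p|\in[\theta',C\alpha_i]$, one passes to the approximating plane at $y$ at scale $\sim R$, lifts a point near $p$ to that plane (possible because the tilt is $<1$), then back to $S$, producing $y'$ with $|\pi(y')-p|\le\tfrac34 R$; a Pythagoras computation using $R\ge C\alpha_i^{2}$ is what keeps $y'$ inside $B_{2^{-i}}(x)$. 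This is precisely the origin of the extra $\vee\, C\alpha_i^{2}$ in the paper's single-scale bound $\a(x,2^{-i})\le C\bigl(\sup_{j\ge 0}(\beta_{i-2}+\cdots+\beta_{i+j})^{2}/2^{j}\bigr)\vee C\alpha_i^{2}$.

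\textbf{Large tilt.} The self-improvement idea is misplaced: the distortion of the \emph{fixed} projection $\pi$ at scale $2^{-k}$ is not governed by $\a(\cdot,2^{-k})$, so there is nothing to feed back. The paper's device is much simpler. With the single-scale bound taken as the supremum $\theta$ above, whenever $C(\beta_{i-2}+\dots+\beta_k)\ge1$ the definition of $\theta$ evaluated at $j=k-i$ already gives $\theta\ge 2^{-(k-i)+1}\ge|p-q|$, so the trivial inequality $\bigl||p-q|-|\pi(p)-\pi(q)|\bigr|\le|p-q|$ suffices; no case analysis survives to the summation step. Finally, for the one-sided numbers you do not need a parametrization: by definition $\b(x,r)$ asks only for a $\delta$-isometry $S\cap B_r(x)\to B^{\rr^n}_r(0)$ with no surjectivity, so the same projection $\pi$ works (and the paper uses exactly this); holes in $S$ are irrelevant there.
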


Theorem \ref{thm:mainA}  will follow from a `weak' version of the inequality $\a(x,r)\lesssim\alpha(x,r)^2$ (see Theorem \ref{thm:precise}), which as said above cannot hold in its `strong' form. 

It has to be said that the fact that the numbers $\a(x,r)$'s ``behaves" as the square of numbers $\alpha(x,r)$'s  was already noted, at least at an informal level, by David and Toro (see \cite{snow}). However, to the author's best knowledge, both  the statement and the proof of Theorem \ref{thm:mainA} are new.

\bigskip
\begin{center}
	{\bf The Jones' numbers $\beta $}
\end{center}

We will also prove the analogue of Theorem \ref{thm:mainA} for the ``one sided"-version of the numbers $\alpha(x,r)$: given a set $S\subset  \rr^d$ and $n \in \mathbb{N}$, with $n <d$, we set
\begin{equation}\label{eq:betadef}
	\beta(r,x)\coloneqq  r^{-1} \,\inf_{\Gamma }  \sup_{y \in S\cap B_r(x)} \sfd(y,\Gamma), \quad \text{for every  $r>0$ and $x \in  S,$ }
\end{equation}
where  the infimum is taken among all the $n$-dimensional affine planes $\Gamma$ containing $x$ . The numbers $\beta(x,r)$ are usually refer to as $L^\infty$-\emph{Jones' numbers} (see for example \cite{holes}, \cite{bj} and \cite{jones}). It is immediate from the definition that $\beta(x,r)\le \alpha(x,r),$ for every $x\in S$ and $r>0.$

We then define the following metric analogue of $\beta(x,r)$: for a set $S\subset  \rr^d$ and $n \in \mathbb{N}$, with $n <d$,  we set
	\begin{equation*}
	\b(r,x)\coloneqq  r^{-1} \,\inf \left \{\delta \ : \ \text{there exists a $\delta$-isometry } f : (S\cap B^{\rr^d }_r(x),\sfd_{Eucl}) \to (B^{\rr^n}_{r}(0),\sfd_{Eucl})\right \},
\end{equation*}
for every  $r>0$ and $x \in  S$ (see Section \ref{sec:proofmain} for the definition of $\delta$-isometry). 
As for the numbers $\alpha(x,r)$ and $\a(x,r)$ we have the immediate inequality
\begin{equation}\label{eq:trivial bound 2}
	\b(x,r)\le2\beta(x,r), \quad \text{for every  $r>0$ and $x \in  S$ }.
\end{equation}
Similarly to the numbers $\alpha_i, \a_i$ we define for a given $S \subset \rr^d$, $n \in \mathbb{N}$ with $n<d$, a  fixed $\eps\in(0,1/2)$ and every $i \in \mathbb{Z}$
\begin{equation}\label{eq:bi}
\begin{split}
	\beta_i\coloneqq\sup_{x \in S\cap B_1(0)}\beta(x,2^{-i}),\quad \b_i\coloneqq\sup_{x \in S\cap B_{1-\eps}(0)}\b(x,2^{-i}),
\end{split}
\end{equation}
where we neglected the dependence on $\eps$, $n$ and $S$. Then we can prove the following:
\begin{theoremx}\label{thm:mainB}
	For every $n \in \mathbb{N}$ there exists $\delta(n)>0$ such that the following holds.  Let $S\subset \rr^d$, with $d>n,$ $\eps\in(0,1/2)$ and define the numbers $\beta_i,\b_i$ as in \eqref{eq:bi}.  Suppose that $\alpha_i\le \delta$ for every $i \ge \bar i-2$, for some $\bar i \in \nn$ with $2^{-\bar i}<\eps$ (where $\alpha_i$ are as in \eqref{def:ai}), then
\begin{equation}\label{eq:mainB precise}
	\sum_{i \ge \bar i} \b_i^{\lambda} <C_\lambda \sum_{i \ge \bar i-2} \beta_i^{2\lambda}, \quad \forall \lambda>0,
\end{equation}
where $C_\lambda$ is a positive constant depending only on $\lambda$ and $n$.
\end{theoremx}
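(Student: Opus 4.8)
The strategy is to reduce Theorem \ref{thm:mainB} to the machinery already developed for Theorem \ref{thm:mainA}, exploiting that the only difference between $\beta$ and $\alpha$ (resp.\ $\b$ and $\a$) is that the approximation is ``one-sided'': in the $\beta$/$\b$ case we only require $S\cap B_r(x)$ to lie close to a plane (resp.\ to embed into the model ball), but not conversely. Concretely, I would first record the elementary comparison inequalities: $\beta(x,r)\le\alpha(x,r)$ and $\b(x,r)\le 2\beta(x,r)$ (the latter is \eqref{eq:trivial bound 2}), together with a lower-type bound showing that under the smallness hypothesis $\alpha_i\le\delta(n)$ the plane realizing $\beta(x,2^{-i})$ (up to a fixed multiplicative factor) can be taken to pass through points of $S$ that are genuinely $n$-dimensionally spread, i.e.\ contain an $n$-simplex of definite size — this is exactly the Reifenberg-type non-degeneracy that makes the one-sided flatness control the two-sided geometry at comparable scales, and it is already implicit in the hypothesis $\alpha_i\le\delta$ being assumed (rather than $\beta_i\le\delta$) in the statement.

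Next I would run the quantitative core argument of Theorem \ref{thm:precise} (the ``weak'' form of $\a(x,r)\lesssim\alpha(x,r)^2$) in its one-sided incarnation. The heart of that argument is: choose at scale $2^{-i}$ the affine plane $\Gamma_{x,i}$ nearly realizing $\beta(x,2^{-i})$, and use the orthogonal projection $\pi_{x,i}\colon\rr^d\to\Gamma_{x,i}$ as a candidate map. One shows that $\pi_{x,i}$ restricted to $S\cap B_{2^{-i}}(x)$ distorts distances by at most a factor $1+O(\beta)$ in one direction (projections are $1$-Lipschitz, so one bound is free) and by $1+O(\beta^2+\sum_{j}\beta_j\,\text{or similar})$ in the other — the quadratic gain comes, as in the thin-triangle example, from the Pythagorean identity $|\pi(y)-\pi(z)|^2=|y-z|^2-|y^\perp-z^\perp|^2$ with the normal components bounded by $\beta\cdot 2^{-i}$. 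Surjectivity onto $B^{\rr^n}_{2^{-i}}(0)$ (up to an error) is where the non-degeneracy from the previous paragraph enters, exactly as in the two-sided case but now needing the $\alpha$-smallness hypothesis rather than mere $\beta$-smallness. This produces the pointwise bound $\b(x,2^{-i})\lesssim \beta(x,2^{-i})^2 + (\text{controlled lower-order terms from a finite window of nearby scales})$, which after taking suprema gives $\b_i\lesssim \beta_{i}^2 + \beta_{i-1}^2 + \beta_{i-2}^2$ or a similar finite-band estimate.

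The passage from the pointwise/scalewise estimate to the summed estimate \eqref{eq:mainB precise} is then the same soft step used to deduce Theorem \ref{thm:mainA} from Theorem \ref{thm:precise}: raise to the power $\lambda$, use $(a+b)^\lambda\le C_\lambda(a^\lambda+b^\lambda)$ for $\lambda\le 1$ (and convexity/H\"older-type bounds for $\lambda>1$, absorbing the fixed number of shifted copies $\beta_{i-1},\beta_{i-2}$ into the constant), and sum over $i\ge\bar i$, with the shift of the index range from $\bar i$ to $\bar i-2$ on the right-hand side accounting precisely for the finite band of nearby scales. I expect the main obstacle to be the surjectivity/non-degeneracy step: unlike for $\a$, a one-sided approximation gives no a priori control that the image of $\pi_{x,i}$ fills a whole $n$-ball, so one must invoke the hypothesis $\alpha_i\le\delta(n)$ — or equivalently reprove a small piece of Reifenberg-type topological regularity — to guarantee that $S\cap B_{2^{-i}}(x)$ is not contained in a lower-dimensional set and that $\pi_{x,i}(S\cap B_{2^{-i}}(x))\supseteq B^{\rr^n}_{(1-C\delta)2^{-i}}(0)$; keeping the constants here dimensional and uniform in $i$ is the delicate bookkeeping. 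Everything else is a routine adaptation of the $\alpha\to\a$ proof with ``$\le$'' replaced by ``one-sided $\le$'' throughout.
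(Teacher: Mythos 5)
Your overall plan — project $S\cap B_{2^{-i}}(x)$ onto a plane nearly realizing $\beta(x,2^{-i})$, exploit Pythagoras to gain a quadratic factor in the distortion, then sum by raising to the power $\lambda$ — is essentially the route the paper follows via Theorem~\ref{thm:precise}. However there are two substantive problems with the execution you sketch.

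First, the predicted form of the scalewise estimate is wrong, and in a way that matters. You propose a \emph{finite-band} bound of the type $\b_i\lesssim\beta_i^2+\beta_{i-1}^2+\beta_{i-2}^2$. That cannot be what comes out of the argument, and it is not what the paper proves. The reason: if $x,y\in S\cap B_{2^{-i}}$ are at a very small distance $|x-y|\sim 2^{-i-j}$ with $j$ large, then to control $\bigl||\Pi(x)-\Pi(y)|-|x-y|\bigr|$ one must relate the single projection plane $\Gamma_0^1$ chosen at the top scale to a realizing plane for $S$ near $x$ at the scale $\sim|x-y|$, and by the tilting estimate (Proposition~\ref{prop:tilt}) this comparison costs $\beta_{i-2}+\dots+\beta_{i+j}$, i.e.\ an accumulated sum over all intermediate scales. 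The resulting bound is
\[
\b(x,2^{-i})\ \le\ C\,\sup_{j\in\mathbb N_0}\frac{\bigl(\beta_{i-2}+\dots+\beta_{i+j}\bigr)^2}{2^{j}},
\]
which is \eqref{eq:preciseb}. The factor $2^{-j}$, coming from $|x-y|\sim 2^{-i-j}$, is precisely what rescues summability: after expanding $(\beta_{i-2}+\dots+\beta_{i+j})^{2\lambda}\le C(j+3)^{(2\lambda-1)\vee0}\sum_{k}\beta_{i+k}^{2\lambda}$, interchanging the sums over $i$ and $j$, and using the geometric decay of $2^{-\lambda j}(j+3)^{2\lambda\vee 1}$, one gets \eqref{eq:mainB precise}. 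Without the accumulated-tilting/weighted-sup structure, the ``routine adaptation'' you describe cannot close: a finite-band pointwise estimate would make the conclusion nearly trivial, but it is not available.

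Second, the surjectivity/non-degeneracy concern you single out as ``the main obstacle'' is misplaced for Theorem~\ref{thm:mainB}. By definition $\b$ only asks for a $\delta$-isometry $f:(S\cap B_r(x))\to B_r^{\rr^n}(0)$; there is no approximate-surjectivity requirement. That is exactly why \eqref{eq:preciseb} has no extra $\alpha_i^2$ term, whereas \eqref{eq:precisea} (the $\a$ estimate, for which the GH distance forces near-onto maps) does. The hypothesis $\alpha_i\le\delta(n)$ is needed in Theorem~\ref{thm:mainB} not to fill a ball, but to ensure the existence of realizing planes (Proposition~\ref{prop:realizing planes}) and the validity of the tilting estimates (Proposition~\ref{prop:tilt}) that drive the accumulated sum above; these are statements about $\alpha$, not $\beta$.
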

In particular Theorem \ref{thm:mainB} implies that,  whenever $\limsup_{i\to +\infty} \alpha_i<\delta$ (with $\delta$ as in the statement of the theorem), we have that
\begin{equation}\label{eq:mainB}
	\sum_{i \ge 0} \beta_i^{2\lambda} < +\infty\Longrightarrow \sum_{i \ge 0} \b_i^\lambda < +\infty, 
\end{equation}
for every $\lambda>0$.

As for Theorem \ref{thm:mainA}, Theorem \ref{thm:mainB}  will be deduce from a `weak' version of the inequality $\b(x,r)\lesssim\beta(x,r)^2$, which is also contained in  Theorem \ref{thm:precise}.

\bigskip
\begin{center}
	{\bf Converse inequalites}
\end{center}
In the last section we will prove that, contrary to the inequality `$\a(x,r)\lesssim\alpha^2(x,r)$', the opposite estimate $\alpha^2(x,r)\lesssim \a(x,r)$ holds in full generality. Moreover we will also prove that   $\beta(x,r)^2\le \b(x,r)$ holds, provided $B_r(x)\cap S$ contains $n$  `sufficiently'  independent points. This combined with the results in Theorem \ref{thm:mainA} and \ref{thm:mainB} shows that the intrinsic `Gromov-Hausdorff' approach and the `extrinsic' Hausdorff approach to measuring `flatness' in the Euclidean space are in a sense equivalent up to a square factor.

\bigskip
\begin{center}
	{\bf Motivations and application to Reifenberg's theorem}
\end{center}

We now explain  the role, consequences  and  motivations of the results in this note.

It is essential to recall that the quantities $\alpha(x,r)$ and $\beta(x,r)$ coupled with smallness or summability conditions as in \eqref{eq:mainA}, \eqref{eq:mainB} (also called Dini-conditions) are tightly linked to parametrization and rectifiability results for sets in the Euclidean space. The more classical are the celebrated Reifenberg theorem (\cite{r}) and the rectifiability results of Jones (\cite{jones}), but there are also more recent and sophisticated  works containing variants, generalizations and refinements of these type of statements (see for example  \cite{toro}, \cite{snow}, \cite{holes} and the references therein). It also worth to mention the the works in \cite{nv}, \cite{env} and \cite{tolsa}, which contain similar results, but where $L^p$-versions of the $\beta$-Jones' numbers are considered.

There has been recently a growing interest in extending  statements for sets in $\rr^d$  as above (or  rectifiability results in general), to the setting of metric spaces. The most notable instance of this is the intrinsic-Reifenberg theorem by Cheeger and Colding (\cite{CC}), which has recently  found many applications especially in the theory of singular metric spaces with synthetic curvature conditions (see for example \cite{CC}, \cite{mk} and \cite{ricci2}).

If one is interested in extending to the setting of metric spaces  results in $\rr^d$  involving the quantities $\alpha(x,r)$ and $\beta(x,r)$ (or variants of them), it is more convenient to consider instead  the numbers $\a(x,r)$ and $\b(x,r)$. This is because the  the quantities $\alpha(x,r), \beta(x,r)$ are confined to the Euclidean space, while their ``Gromov-Hausdorff" counterparts are immediately generalized to the metric setting. For this reason it is essential to have a good understanding of the relation between the numbers $\alpha(x,r),\beta(x,r)$ and the numbers $\a(x,r),\b(x,r)$. This is the point in which  Theorem \ref{thm:mainA} and Theorem \ref{thm:mainB} find their relevance.  

Indeed our Theorem \ref{thm:mainA} shows that it makes sense to interpret the numbers $\alpha(x,r)$ as the square of $\a(x,r)$, at least when one is interested in their decay behaviour. To explain  further the consequence of this fact we now show that Theorem \ref{thm:mainA} is crucial if one wants to extend the biLipschitz version of Reifenberg theorem to the metric setting. Let us also say that this problem is what originated the writing of this note.

We first need to recall the classical Reifenberg's theorem:
\begin{theorem}[\cite{r}]\label{thm:reif}
	For every $n,d\in \nn$  with $n<d$ there exists $\delta=\delta(n,d)$ such that the following holds. Let $S\subset \rr^d$ be closed, containing the origin and such that
	\[
	\alpha_i<\delta,\quad \forall i \in \nn_0,
	\]
	where $\alpha_i$ are as in \eqref{def:ai}. Then there exists a biH\"older homeomorphism $F: \Omega\to S\cap B_{1/2}^{\rr^n}(0)$, where $\Omega$ is an open set in $\rr^n.$
\end{theorem}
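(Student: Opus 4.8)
The plan is to realize $F$ as the uniform limit of a sequence of maps that successively ``straighten'' $S$ scale by scale, in the spirit of Reifenberg's original argument and its later streamlinings. Let $P_0\ni 0$ be an $n$-plane almost realizing the infimum in the definition of $\alpha(0,1)$. For each $i\in\nn_0$ fix a maximal $2^{-i}$-separated set $\{x_{i,j}\}_j$ in $S\cap B_{1/2}^{\rr^d}(0)$, $n$-planes $P_{i,j}\ni x_{i,j}$ almost realizing $\alpha(x_{i,j},10\cdot 2^{-i})$, and a partition of unity $\{\psi_{i,j}\}_j$ with $\psi_{i,j}$ supported in $B_{4\cdot 2^{-i}}(x_{i,j})$ and $\sum_j\psi_{i,j}\equiv 1$ on a neighborhood of $S\cap B_{1/2}^{\rr^d}(0)$. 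Define $\sigma_i\colon\rr^d\to\rr^d$ by $\sigma_i(y)\coloneqq y+\sum_j\psi_{i,j}(y)\,(\pi_{P_{i,j}}(y)-y)$, where $\pi_P$ is the nearest-point projection onto the affine plane $P$ (and $\sigma_i=\mathrm{id}$ away from a neighborhood of $S$). Here the hypothesis $\alpha_i<\delta$ for every $i$ is the crucial input: whenever two of the balls $B_{4\cdot 2^{-i}}(x_{i,j})$ overlap, the corresponding planes differ by at most $C\delta$ in direction and by at most $C\delta\,2^{-i}$ in position, which forces $\sigma_i$ to be a $C^1$ perturbation of the identity with $\|D\sigma_i-I\|\le C\delta$, hence a $(1+C\delta)$-biLipschitz homeomorphism of $\rr^d$ that displaces every point by at most $C\delta\,2^{-i}$ and pushes points of $S$ into the $C\delta\,2^{-i}$-neighborhood of the family $\{P_{i,\cdot}\}$.

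\textbf{Passing to the limit and identifying $\Omega$.} Set $F_k\coloneqq\sigma_k\circ\sigma_{k-1}\circ\cdots\circ\sigma_0$ and restrict attention to an appropriate region of $P_0$. The displacement bound gives $\|F_{k+1}-F_k\|_\infty\le C\delta\,2^{-k}$, so $F_k$ converges uniformly on compacta to a continuous map $F$. For the biHölder estimate one uses that, when $|y-y'|\sim 2^{-k}$, one has $F(y)=F_k(y)+O(\delta\,2^{-k})$ and likewise for $y'$, while $F_k$ is a composition of $k$ maps each $(1+C\delta)$-biLipschitz, so that $(1+C\delta)^{\pm k}\sim|y-y'|^{\mp C\delta}$; this yields $c\,|y-y'|^{1+C\delta}\le|F(y)-F(y')|\le C\,|y-y'|^{1-C\delta}$, i.e. $F$ is a biHölder embedding with exponents $\to 1$ as $\delta\to 0$. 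In particular, choosing $\delta=\delta(n,d)$ small makes $F$ injective and open onto its image. One then takes $\Omega\subset P_0\cong\rr^n$ to be the open set on which this construction is legitimate (morally $\pi_{P_0}(S\cap B_{1/2}^{\rr^d}(0))$, slightly shrunk), and checks that $F(\Omega)=S\cap B_{1/2}^{\rr^n}(0)$. Finally, although the hypothesis only concerns dyadic scales, $\alpha(x,r)$ for $r\in[2^{-i-1},2^{-i}]$ is bounded by $C_n(\alpha_i+\alpha_{i+1})$, so nothing is lost in passing to all radii.

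\textbf{Main obstacle.} The genuinely delicate part is the identification of the image. The inclusion $F(\Omega)\subseteq S$ is comparatively soft: each $\sigma_i$ pushes into a $C\delta\,2^{-i}$-neighborhood of $S$, so in the limit points land in $\overline S=S$. The reverse inclusion is where the real work lies: given $z\in S\cap B_{1/2}^{\rr^d}(0)$ one must produce a point of $P_0$ whose forward orbit under the $\sigma_i$ stays $O(\delta\,2^{-i})$-close to $z$ at every scale, which one obtains by a compactness argument exploiting that $z$ is $C\delta\,2^{-i}$-close to all the local planes near it, or alternatively by a topological-degree/connectedness argument applied to the map $\pi_{P_0}\circ F$ (a homeomorphism onto an open subset of $P_0$ by the biHölder bounds, uniformly close to the identity). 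Closely intertwined with this is the bookkeeping needed to pin down $\Omega$ precisely and to match the radius $1/2$ (one works slightly inside $B_{1/2}^{\rr^d}(0)$ and uses openness of $F$ together with its proximity to the identity on $P_0$). Establishing the coherence of the plane choices and these image/boundary statements — rather than the per-scale biLipschitz estimates for the individual $\sigma_i$, which are routine — is the heart of the proof.
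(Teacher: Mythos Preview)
The paper does not prove this statement: Theorem~\ref{thm:reif} is recalled there as the classical Reifenberg theorem, with a citation to Reifenberg's original 1960 paper, and is used only as background to motivate Theorems~\ref{thm:lipreif}--\ref{thm:CClip}. There is therefore no ``paper's own proof'' to compare your proposal against.

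That said, your outline is a faithful sketch of the standard modern proof (essentially the David--Toro streamlining of Reifenberg's argument): choose local approximating planes at each dyadic scale, glue the corresponding projections via a partition of unity into a $(1+C\delta)$-biLipschitz map $\sigma_i$ with displacement $O(\delta\,2^{-i})$, compose, and pass to the uniform limit. Your derivation of the biH\"older exponents from $(1+C\delta)^{\pm k}\sim |y-y'|^{\mp C\delta}$ is the right mechanism, and you correctly flag that the identification of the image $F(\Omega)$ with $S\cap B_{1/2}$---the surjectivity direction in particular---is where the substantive work lies. One small wording slip: when you say ``$\sigma_i$ pushes points of $S$ into the $C\delta\,2^{-i}$-neighborhood of the family $\{P_{i,\cdot}\}$'', what you actually need (and what the construction gives) is that $\sigma_i$ pushes the \emph{current image} $F_{i-1}(\Omega)$, which sits in a $C\delta\,2^{-(i-1)}$-neighborhood of $S$, into a $C\delta\,2^{-i}$-neighborhood of the scale-$i$ planes and hence of $S$; the induction is on the image of the composition, not on $S$ itself.
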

It was proven by Toro that if we require, besides smallness, also a fast decay of the number $\alpha_i$ as $i \to +\infty,$ the biH\"older regularity of the map $F$ can be improved to biLipschitz. In particular we have the following:
\begin{theorem}[\cite{toro}]\label{thm:lipreif}
	For every $\eps>0$, $n,d\in \nn$  with $n<d$, there exists $\delta=\delta(n,d,\eps)>0$ such that the following holds. Let $S\subset \rr^d$ be closed, containing the origin and such that
	\begin{equation}\label{eq:dini1}
		\sum_{i \ge 0}\alpha_i^2<\delta,
	\end{equation}
	where $\alpha_i$ are as in \eqref{def:ai}. Then there exists a (1+$\eps$)-biLipschitz homeomorphism $F: \Omega\to S\cap B_{1/2}^{\rr^n}(0)$, where $\Omega$ is an open set in $\rr^n.$
\end{theorem}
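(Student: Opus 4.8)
The plan is to run the classical Reifenberg construction, which produces the parametrization as an infinite composition of scale-by-scale perturbations of $\rr^d$, and to upgrade the bookkeeping from the $C^0$-estimates that already yield the biH\"older statement of Theorem \ref{thm:reif} to a control of the \emph{metric distortion} introduced at each step. The mechanism is precisely the ``squaring'' phenomenon studied in this note: although the best approximating $n$-plane of $S$ in $B_{2^{-i}}(x)$ tilts by an angle $\lesssim\alpha_i$ when one passes to scale $2^{-i-1}$, such a tilt is realized by a rigid rotation, which is an isometry; hence the \emph{length} distortion created by the $i$-th perturbation is only of order $\alpha_i^2$, and summing this Dini-type series gives a bi-Lipschitz bound that becomes as close to $1$ as we like once $\delta$ is small.

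In more detail, first I would fix for each $i\in\nn_0$ and each point $x$ an $n$-plane $\Gamma_i(x)$ almost realizing the infimum in \eqref{eq:alfadef} at scale $2^{-i}$; smallness of the $\alpha_i$ forces this plane field to vary slowly, in the sense that the angle between $\Gamma_i(x)$ and $\Gamma_{i+1}(x)$, and between $\Gamma_i(x)$ and $\Gamma_i(y)$ for $|x-y|\lesssim 2^{-i}$, is $\lesssim\alpha_{i-1}+\alpha_i$. Next I would build maps $\sigma_i\colon\rr^d\to\rr^d$ equal to the identity away from a $2^{-i}$-neighbourhood of $S$ and interpolating, via a partition of unity subordinate to a $2^{-i}$-net on the initial plane $\Gamma_0\cong\rr^n$, between the affine approximations attached to $\Gamma_i$ and to $\Gamma_{i+1}$; the parametrization is then $F\coloneqq\lim_{k\to\infty}\sigma_k\circ\dots\circ\sigma_0$ restricted to $\Gamma_0$, defined on the corresponding open set $\Omega$. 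The bounds $\|\sigma_i-\mathrm{id}\|_\infty\lesssim 2^{-i}\alpha_i$ and $\|D\sigma_i-\mathrm{Id}\|\lesssim\alpha_i$ on the support give uniform convergence and injectivity: this is the classical content of Theorem \ref{thm:reif}.

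The new ingredient is the first-order analysis of the $\sigma_i$. On the support of the $i$-th perturbation one has $D\sigma_i=\mathrm{Id}+E_i$ with $\|E_i\|\lesssim\alpha_i$; splitting $E_i$ into its symmetric and antisymmetric parts, the antisymmetric part is, to leading order, an infinitesimal rotation carrying the directions of $\Gamma_i$ onto those of $\Gamma_{i+1}$, and the crucial estimate is that the \emph{symmetric} part of $E_i$ is only $O(\alpha_i^2)$. This is not automatic: interpolating between the local affine pieces with a partition of unity whose gradient is $\sim 2^i$ generically produces symmetric contributions of size $\alpha_i$, and making them cancel to second order is the heart of the matter — one must choose the planes $\Gamma_i(x)$ so that consecutive ones are as parallel as possible, and choose the local rigid motions so that they patch together with only an $O(\alpha_i^2)$ symmetric error. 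Granting this, $(D\sigma_i)^\top D\sigma_i-\mathrm{Id}=E_i+E_i^\top+E_i^\top E_i=O(\alpha_i^2)$, so $\sigma_i$ changes lengths by at most a factor $1+C\alpha_i^2$. (Concretely: for $p,q\in S$ with $|p-q|\approx 2^{-i}$, orthogonal projection onto $\Gamma_i(p)$ changes $|p-q|$ only by a factor $1+O(\alpha_i^2)$, since the transverse displacement of $S$ at that scale is $\lesssim 2^{-i}\alpha_i$ and enters the Pythagorean identity squared — the very mechanism behind the thin-triangle example in the introduction.)

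It then remains to telescope: for $x,y\in\Omega$ with $|x-y|\approx 2^{-i}$ I would estimate $|F(x)-F(y)|/|x-y|$ by composing the scale-$j$ distortions over $j\gtrsim i$, each a factor $1+C\alpha_j^2$, obtaining $|F(x)-F(y)|=|x-y|\exp(C\sum_{j\ge 0}\alpha_j^2)$, and symmetrically for $F^{-1}$. Under \eqref{eq:dini1} with $\delta=\delta(n,d,\eps)$ small enough this gives the $(1+\eps)$-bi-Lipschitz bound after relabelling $\eps$, while openness of $\Omega$ and the homeomorphism property come for free from Theorem \ref{thm:reif}. The main obstacle, as indicated above, is the second-order cancellation of the symmetric part of each differential; without it the construction only yields a bi-Lipschitz parametrization under the stronger requirement $\sum_i\alpha_i<\infty$ in place of \eqref{eq:dini1}.
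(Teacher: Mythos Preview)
The paper does not prove this theorem at all: Theorem~\ref{thm:lipreif} is quoted from \cite{toro} as background and motivation, with no proof or sketch given in the present note. There is therefore nothing in the paper to compare your proposal against.

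That said, your sketch is a faithful outline of Toro's original argument in \cite{toro}: the Reifenberg-type iterated construction $F=\lim_k\sigma_k\circ\cdots\circ\sigma_0$, the tilting estimates between consecutive approximating planes, and the key point that the metric distortion of each $\sigma_i$ is of order $\alpha_i^2$ (rather than $\alpha_i$) because the leading-order part of $D\sigma_i-\mathrm{Id}$ is antisymmetric. You are also right to flag the second-order cancellation of the symmetric part as the genuine technical content; in \cite{toro} this is handled by a careful choice of the local rigid motions and partition of unity. As a sketch your write-up is accurate, but since the paper under review neither proves nor claims to prove this result, no comparison with ``the paper's own proof'' is possible.
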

It is a remarkable result by Cheeger and Colding  that Reifenberg's theorem can be generalized to metric spaces:
\begin{theorem}[\cite{CC}]\label{thm:CCreif}
	For every $n \in \nn$ there exists $\eps=\eps(n)>0$ such that the following holds. Let  $(Z,\sfd)$ be a complete metric space, let $z_0 \in Z$ and define $\a_i\coloneqq \sup_{z \in B_{2/3}(z_0)} 2^i\sfd_{GH}(B_{2^{-i}}(z),B^{\rr^n}_{2^{-i}}(0))$, $i \in \nn_0.$ Suppose that
	\[
	\a_i\le \eps, \quad \forall \, i \in \nn_0.
	\]
	Then there exists a biH\"older homeomorphism  $F: \Omega\to  B_{1/2}(z_0)$, where $\Omega$ is an open set in $\rr^n.$
\end{theorem}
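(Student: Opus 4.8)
The plan is to reproduce, in outline, the argument of Cheeger and Colding: one constructs the parametrization $F$ as the uniform limit of a sequence of maps $u_i\colon\Omega_i\to Z$, $i\in\nn_0$, where $\Omega_i\subset\rr^n$ is open, each $u_i$ is an ``almost isometry at scale $2^{-i}$'' in the sense that $\big|\sfd(u_i(x),u_i(y))-|x-y|\big|\le C(n)\eps\,2^{-i}$ whenever $|x-y|\le 2^{-i}$, each $u_i(\Omega_i)$ $C(n)\eps\,2^{-i}$-covers $\overline{B_{1/2}(z_0)}$, and consecutive maps are close: $\sfd(u_{i+1}(x),u_i(x))\le C(n)\eps\,2^{-i}$ on the common part of their domains. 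This is the intrinsic counterpart of the classical ``tilting planes'' scheme for the Euclidean Reifenberg theorem: the role of the affine $n$-plane $P_i\subset\rr^d$ best approximating $S$ in $B_{2^{-i}}$ is played by the Euclidean chart supplied by the Gromov--Hausdorff approximation at scale $2^{-i}$. The base map $u_0$ is any $C(n)\eps$-isometry from a neighbourhood of $B_{1/2}^{\rr^n}(0)$ onto a neighbourhood of $B_{1/2}(z_0)$, extracted from $\a_0\le\eps$.

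For the inductive step I would fix a maximal $2^{-i-1}$-separated subset $\{z_\beta\}$ of $B_{2/3}(z_0)$; the hypothesis $\a_{i+1}\le\eps$ furnishes, for each $\beta$, a $C(n)\eps\,2^{-i-1}$-isometry $g_\beta\colon B_{2^{-i-1}}(z_\beta)\to B^{\rr^n}_{2^{-i-1}}(0)$ and hence an approximate inverse $g_\beta^{-1}$ defined on (most of) $B^{\rr^n}_{2^{-i-1}}(0)$, which we regard as a local chart $\rr^n\to Z$. Read through an approximate inverse of the previous chart $u_i$, each $g_\beta^{-1}$ differs from the identity of $\rr^n$ by a map that is $C(n)\eps\,2^{-i}$-close to a rigid motion $A_\beta$ of $\rr^n$; I would choose $A_\beta$ to minimise this discrepancy, the analogue of picking $P_{i+1}$ to be the best plane near $P_i$. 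The fact that \emph{every} ball of radius $2^{-i-1}$ inside $B_{2/3}(z_0)$ carries such a chart forces $A_\beta$ and $A_{\beta'}$ to differ by $O(\eps)$ whenever $B_{2^{-i-1}}(z_\beta)$ and $B_{2^{-i-1}}(z_{\beta'})$ overlap, so the $A_\beta$ vary slowly. One then patches the repositioned local maps $g_\beta^{-1}\circ A_\beta^{-1}$ together with a Lipschitz partition of unity $\{\chi_\beta\}$ on $\rr^n$ subordinate to their (repositioned) domains. Since $Z$ carries no linear structure, the ``weighted average'' of the points $g_\beta^{-1}(A_\beta^{-1}(x))\in Z$ is formed by pushing them into $\rr^n$ through (an approximate inverse of) the scale-$2^{-i}$ chart $u_i$ --- all of them lie within $C(n)\eps\,2^{-i}$ of $u_i(x)$, hence in a region where that chart is a faithful almost-isometry --- taking the genuine convex combination $\sum_\beta\chi_\beta(x)\,u_i^{-1}(g_\beta^{-1}(A_\beta^{-1}(x)))$ there, and mapping back by $u_i$; this ``approximate barycentre'' costs only an additional $C(n)\eps\,2^{-i}$. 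Setting $u_{i+1}(x)$ equal to this point, one checks directly that $\sfd(u_{i+1}(x),u_i(x))\le C(n)\eps\,2^{-i}$ and that $u_{i+1}$ is a $C(n)\eps\,2^{-i-1}$-isometry at scale $2^{-i-1}$ (on any $2^{-i-1}$-ball the weights $\chi_\beta$ are nearly constant, so there $u_{i+1}$ essentially coincides with a single $g_\beta^{-1}$). The domains $\Omega_i$ are adjusted by $O(2^{-i})$ at each step and stabilise to an open set $\Omega\subset\rr^n$.

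The closeness estimate shows $(u_i)$ is uniformly Cauchy with $\sfd(F(x),u_i(x))\le C(n)\eps\,2^{-i}$, so $F\coloneqq\lim_i u_i$ exists and is continuous on $\Omega$. For $x,y$ with $|x-y|\approx 2^{-i}$, combining this with the scale-$2^{-i}$ almost-isometry property of $u_i$ gives $\big|\sfd(F(x),F(y))-|x-y|\big|\le C(n)\eps\,2^{-i}$; iterating the control down through the scales, the distortion of $u_i$ relative to $u_0$ is a product of $i$ factors, each $(1\pm C(n)\eps)$, so it lies between $(1-C(n)\eps)^i$ and $(1+C(n)\eps)^i$, i.e. is comparable to $|x-y|^{\mp C(n)\eps}$. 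Hence $|x-y|^{1+C(n)\eps}\lesssim\sfd(F(x),F(y))\lesssim|x-y|^{1-C(n)\eps}$, a bi-Hölder estimate with exponent tending to $1$ as $\eps\to 0$; in particular $F$ is injective. It is a local quasi-isometry, so composing it near any point with the relevant scale chart reduces openness to a Brouwer-degree argument for a self-map of an open subset of $\rr^n$ that is $C(n)\eps$-close to the identity, whence $F$ is open onto its image. Finally $F(\Omega)\supset B_{1/2}(z_0)$: each $u_i$ near-surjects onto $\overline{B_{1/2}(z_0)}$, so a point $p\in B_{1/2}(z_0)$ can be followed back through the scales (choose $x_i$ with $\sfd(u_i(x_i),p)$ small; the $x_i$ are Cauchy by the bi-Hölder bound and converge to some $x\in\Omega$ with $F(x)=p$). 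Restricting the domain to $F^{-1}(B_{1/2}(z_0))$, still open, yields the desired bi-Hölder homeomorphism onto $B_{1/2}(z_0)$.

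The main obstacle is the gluing step in the bare metric space: there is no intrinsic weighted average of points of $Z$, and the device of averaging ``through the previous chart'' must be arranged so that all the error terms --- the $\eps\,2^{-i}$ from the Gromov--Hausdorff approximations, the $O(\eps)$ misalignment of the rigid motions $A_\beta$, the distortion of $u_i$, and the Lipschitz norm of $\{\chi_\beta\}$ --- combine to a clean $C(n)\eps\,2^{-i}$ at each scale and, crucially, do not amplify under iteration. This bookkeeping also makes transparent why, under the mere smallness assumption $\a_i\le\eps$, one obtains only bi-Hölder regularity: the per-scale multiplicative distortions $(1\pm C(n)\eps)$ accumulate geometrically rather than to a finite product. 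Upgrading $F$ to bi-Lipschitz requires the multiplicative errors $\prod_i(1\pm C(n)\a_i)$ to converge, i.e. a Dini-type bound $\sum_i\a_i<\infty$; and it is exactly the passage between such a bound on the intrinsic numbers $\a_i$ and one on the genuine Reifenberg numbers $\alpha_i$ that is provided by Theorem~\ref{thm:mainA}.
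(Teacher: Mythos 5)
The paper offers no proof of this theorem: it is quoted verbatim from Cheeger--Colding \cite{CC} (their Appendix 1), so there is nothing internal to compare your argument against. Taken on its own terms, your sketch is a faithful outline of the known construction: iterated gluing of local Gromov--Hausdorff charts via a partition of unity, with the averaging of points of $Z$ performed by transporting them into $\rr^n$ through the previous-scale chart, followed by a telescoping/Cauchy argument and a degree-theoretic openness step. You also correctly isolate the genuinely delicate point (the absence of barycentres in a bare metric space) and correctly explain why the conclusion is only biH\"older under uniform smallness, which is precisely the observation that motivates Theorem \ref{thm:CClip} and hence this paper.

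One internal tension is worth flagging. You assert that for $|x-y|\approx 2^{-i}$ one gets $\bigl|\sfd(F(x),F(y))-|x-y|\bigr|\le C(n)\eps\,2^{-i}$; if that held with $|x-y|$ the genuine Euclidean distance in the \emph{fixed} final domain $\Omega$, it would read $\sfd(F(x),F(y))=(1\pm C\eps)|x-y|$ at every scale, i.e.\ $F$ would be biLipschitz --- contradicting your (correct) next sentence, where the distortion accumulates multiplicatively as $(1\pm C\eps)^i$. The resolution, which your write-up elides, is that the almost-isometry property of $u_i$ is with respect to the scale-$i$ coordinates on $\Omega_i$, and the identification of $\Omega_i$ with the fixed domain $\Omega$ is the composition of $i$ transition maps, each distorting by a factor $1\pm C\eps$; it is this change of coordinates, not the target-side error, that produces the H\"older exponent $1\pm C(n)\eps$. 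As a proof this would need to be made precise (together with the injectivity/openness bookkeeping), but as a reconstruction of the Cheeger--Colding argument the sketch is essentially right.
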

As said above this result found recently a wide range of applications, in particular in the study of regularity of singular metric spaces.  
It is therefore natural to ask weather also an analogous of Theorem \ref{thm:lipreif} holds in the metric setting.
A careful analysis of the arguments in\cite{CC} shows that, with little  modifications, they can be adapted to prove  the following biLipschitz version of Theorem \ref{thm:CCreif}: 
\begin{theorem}\label{thm:CClip}
	For every $n \in \nn$ and $\eps>0$ there exists $\delta=\delta(n,\eps)>0$ such that the following holds. Under the notations and assumptions of  Theorem \ref{thm:CCreif}  suppose that
	\begin{equation}\label{eq:dini2}
		\sum_{i\ge0} \a_i<\delta. 
	\end{equation}
	Then there exists a (1+$\eps$)-biLipschitz homeomorphism  $F: \Omega\to  B_{1/2}(z_0)$, where $\Omega$ is an open set in $\rr^n.$
\end{theorem}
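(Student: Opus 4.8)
The plan is to revisit the proof of Theorem~\ref{thm:CCreif} in \cite{CC} and to verify that, under the stronger Dini hypothesis \eqref{eq:dini2}, the homeomorphism produced there is already $(1+\eps)$-biLipschitz; only minor bookkeeping changes are required, and no new construction is needed. Recall the shape of the Cheeger--Colding argument. For every scale $2^{-k}$ one fixes a maximal $2^{-k}$-separated subset $\{z_{k,j}\}_j$ of $B_{2/3}(z_0)$ and, using $\a_k\le\eps$, a Gromov--Hausdorff $\a_k2^{-k}$-approximation $\phi_{k,j}$ between $B_{2^{-k}}(z_{k,j})$ and $B^{\rr^n}_{2^{-k}}(0)$. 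These local charts are glued across nearby centres and across consecutive scales by a partition of unity (a ``centre of mass'' averaging), producing an increasing family of open sets $\Omega_k\subset\rr^n$ and maps $F_k\colon\Omega_k\to Z$ which converge, uniformly on compact subsets of $\Omega:=\bigcup_k\Omega_k$, to a homeomorphism $F\colon\Omega\to B_{1/2}(z_0)$. The openness of $\Omega$ and the homeomorphism property use only the smallness of the $\a_k$'s and are unaffected; what has to be tracked is the metric distortion of the $F_k$'s.

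Two estimates, both already implicit in \cite{CC}, are what matter. First, the correction performed in passing from scale $2^{-k}$ to scale $2^{-(k+1)}$ has size $\a_k$ relative to $2^{-k}$:
\[
\sfd\big(F_{k+1}(x),F_k(x)\big)\le C(n)\,\a_k\,2^{-k},\qquad x\in\Omega_k.
\]
Second, $F_k$ is biLipschitz ``at scale $2^{-k}$'' with constant controlled by $\prod_{i\le k}(1+C(n)\a_i)$; concretely, for $x,y\in\Omega_k$ with $2^{-k-1}\le|x-y|\le 2^{-k}$,
\[
\Big(1-C(n)\!\sum_{i\le k}\a_i\Big)|x-y|\ \le\ \sfd\big(F_k(x),F_k(y)\big)\ \le\ \Big(1+C(n)\!\sum_{i\le k}\a_i\Big)|x-y|.
\]
In the setting of Theorem~\ref{thm:CCreif} one only knows $\sum_{i\le k}\a_i\le(k+1)\eps$, so these constants grow geometrically in $k$, which is exactly the source of the H\"older (rather than Lipschitz) regularity of $F$ there. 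Under \eqref{eq:dini2}, instead, $\sum_{i\le k}\a_i<\delta$ uniformly in $k$, so the distortion of \emph{every} $F_k$ is bounded by $1+C(n)\delta$.

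Granted these, the conclusion is immediate. By the first estimate, for $x\in\Omega_k$,
\[
\sfd\big(F(x),F_k(x)\big)\le C(n)\sum_{i\ge k}\a_i\,2^{-i}\le C(n)\,2^{-k}\!\sum_{i\ge k}\a_i\le C(n)\,\delta\,2^{-k}.
\]
A routine scale-matching argument — given $x\ne y$ in $\Omega$, apply the second estimate at the scale $2^{-k}$ comparable to $|x-y|$ and absorb the $C^0$-error just bounded — then yields
\[
\big(1-C(n)\delta\big)|x-y|\ \le\ \sfd\big(F(x),F(y)\big)\ \le\ \big(1+C(n)\delta\big)|x-y|,\qquad x,y\in\Omega.
\]
Choosing $\delta=\delta(n,\eps)$ small enough that $C(n)\delta\le\eps/(1+\eps)$ makes $F$ a $(1+\eps)$-biLipschitz homeomorphism onto $B_{1/2}(z_0)$, as claimed.

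The step that genuinely requires care — and the only place where \cite{CC} must be reread rather than quoted — is the \emph{linear} dependence on $\a_k$ in the two estimates above, in particular the fact that the transition maps between consecutive scales lie within $C(n)\a_k2^{-k}$ (on the relevant balls) of honest isometries of $\rr^n$, rather than merely within $\sqrt{\a_k}\,2^{-k}$. This rests on a standard quantitative rigidity for almost-isometries of Euclidean balls — an $\eta r$-almost isometry of $B^{\rr^n}_r(0)$ lies within $C(n)\eta r$ of an element of $\mathrm{Isom}(\rr^n)$, the same mechanism that makes Theorems~\ref{thm:mainA}--\ref{thm:mainB} work — together with the observation that the errors produced at the various scales enter additively, giving the sum $\sum_i\a_i$ and nothing worse; it is precisely this linear bookkeeping that the hypothesis \eqref{eq:dini2} is designed to close. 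We note in passing that, by Theorem~\ref{thm:mainA}, condition \eqref{eq:dini2} is the natural metric counterpart of Toro's condition \eqref{eq:dini1}: morally $\a_i\sim\alpha_i^2$, so $\sum_i\a_i<\delta$ plays the role of $\sum_i\alpha_i^2<\delta$.
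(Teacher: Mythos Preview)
The paper does not actually prove Theorem~\ref{thm:CClip}; immediately before the statement it only asserts that ``a careful analysis of the arguments in \cite{CC} shows that, with little modifications, they can be adapted'' to yield it. Your proposal is precisely such an analysis, and the outline you give --- track the scale-by-scale distortion in the Cheeger--Colding gluing, observe that the errors enter additively as $\sum_i\a_i$, and invoke the Dini hypothesis to keep the product of distortions bounded --- is correct and is exactly what the paper has in mind. So the approach is the same; you have simply supplied the details the paper omits.

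One small point of exposition: your parenthetical that the linear rigidity estimate (an $\eta r$-almost isometry of $B^{\rr^n}_r(0)$ into $\rr^n$ is $C(n)\eta r$-close to an isometry) is ``the same mechanism that makes Theorems~\ref{thm:mainA}--\ref{thm:mainB} work'' is misleading. Those theorems are proved via tilting estimates and a Pythagorean computation in the ambient $\rr^d$ (see Theorem~\ref{thm:precise}), and the paper's own Lemma~\ref{lem:isometrylemma} gives only the $\sqrt{\eps}$ bound, precisely because there the target has \emph{higher} dimension. The linear bound you invoke is specific to the equidimensional case $\rr^n\to\rr^n$ (where the image frame spans the whole target, so no orthogonal ``bending'' can occur) and is a separate, classical fact not stated in the paper. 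This does not affect the correctness of your argument, but the cross-reference should be dropped.
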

Comparing the above with Theorem \ref{thm:lipreif}, the presence of the summability assumption (which is stronger than square summability) might lead to think that something stronger than Theorem \ref{thm:CClip} should hold, at least for `nicer' metric spaces, like subsets of the Euclidean space. Indeed if one restricts its attention only to \eqref{eq:trivial bound} (which as we said, cannot be improved), the summability assumption \eqref{eq:dini2} for a subset $S$ of the Euclidean space (when regarded as metric space $(S,\sfd_{Eucl}))$ appears stronger than \eqref{eq:dini1}. Therefore it may seem that Theorem \ref{thm:lipreif} is in a sense missing some of the informations contained in the theorem of Toro. 

 However the key observation is that Theorem \ref{thm:mainA}  implies that
\begin{equation}\label{eq:stronger}
	\begin{split}
		\text{Theorem \ref{thm:CClip} is {\bf{stronger}} than Theorem \ref{thm:lipreif}},
	\end{split}
\end{equation}
where by ``stronger" we mean that  any set $S$ satisfying the hypotheses of Theorem \ref{thm:lipreif} also satisfies the hypotheses of Thoerem \ref{thm:CClip} (when regarded as metric space $(S,\sfd_{Eucl}))$).

Finally  Theorem \ref{thm:mainA} says also something about the sharpness of Theorem \ref{thm:CClip}, indeed it is well known that the power two in \eqref{eq:dini1} cannot be replaced by any higher order power (see for example \cite{toro}), in particular \eqref{eq:mainA} implies that also the power one in  \eqref{eq:dini2}  cannot be improved. This observation together with \eqref{eq:stronger} suggests that Theorem \ref{thm:CClip} is in a sense the correct generalization of Theorem \ref{thm:lipreif}.

It is worth to mention that another instance (besides Theorem \ref{thm:CClip}) where a summability condition on the Gromov-Hausdorff distances is natural and necessary in metric spaces was already observed by Colding (see \cite[Sec. 4.5]{colding}). Roughly said he proves that the summability of the Gromov-Hausdorff distance from a cone on dyadic scales on a Riemannian manifold is necessary and sufficient to have uniqueness of the tangent cone. Moreover he points out the  discrepancy between this summability assumption in comparison with the square summability assumption in  Theorem  \ref{thm:lipreif} by Toro. As for the biLipschitz Reifenberg above, our Theorem \ref{thm:mainA} explains this discrepancy.

\section*{Acknowledgements}
I am grateful to Guido De Philippis and Nicola Gigli for bringing this problem to my attention and for stimulating discussions. I also wish to thank Tatiana Toro and David Guy for their feedback and comments during the preparation of this note.

\section{Preliminary results}
We gather in this section some elementary and well known results that will be needed in the sequel. 

In what follows and in all this note we denote by $\sfd_H$ the Hausdorff distance  between sets  in $\rr^d$. Moreover  given an affine plane $\Gamma$ in $\rr^d$ and a point $x \in\rr^d$ we denote by $\sfd(x,\Gamma)$ the distance between $x$ and $\Gamma.$

The following elementary Lemma is well known in literature (see for example \cite[Lemma 12.62]{holes}).
\begin{lemma}\label{lem:frame close}
	For every $n \in \mathbb{N} $ with $n$ there exists a constant $C=C(n)>0$ such that the following holds. Suppose that $\Gamma_1,\Gamma_2$ are two affine $n$-dimensional planes in $\rr^d$, with $d>n$, such that there exist points $\{x_i\}_{i=0}^n\subset \Gamma_2\cap B^{\rr^d}_1(0)$ satisfying 
	\[
	\begin{split}
		&|x_i-x_0-e_i|\le \frac{1}{10}, \quad \text{for every $i=1,...,n$},\\
		&\sfd(x_i,\Gamma_1)\le \eps, \quad \text{for every $i=0,...,n$},
	\end{split}
	\]
	where $e_1,...,e_d$ are orthonormal vectors in $\rr^d$ and $\eps\in (0,1/100)$. Then
	\[
	\sfd_H( \Gamma_1\cap B_{4}(0),\Gamma_2\cap B_{4}(0))\le C\eps.
	\]
\end{lemma}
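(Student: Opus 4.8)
The statement asserts: if two affine $n$-planes $\Gamma_1,\Gamma_2$ in $\mathbb{R}^d$ are such that $\Gamma_2$ contains $n+1$ points $x_0,\dots,x_n$ in $B_1(0)$ forming a "near-orthonormal frame" (the $x_i-x_0$ are within $1/10$ of orthonormal vectors $e_i$), and each $x_i$ lies within $\varepsilon$ of $\Gamma_1$, then $\Gamma_1$ and $\Gamma_2$ are $C(n)\varepsilon$-close in Hausdorff distance inside $B_4(0)$.

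First I would reduce the Hausdorff-distance bound to a one-sided statement plus a symmetry argument. Since $\Gamma_1\cap B_4(0)$ and $\Gamma_2\cap B_4(0)$ are both bounded pieces of $n$-planes and we have good control of points of $\Gamma_2$ near $\Gamma_1$, the natural route is: (i) show every point of $\Gamma_2\cap B_4(0)$ is within $C\varepsilon$ of $\Gamma_1$, using the affine structure and the near-orthonormal frame to write an arbitrary point of $\Gamma_2$ as $x_0+\sum_i t_i(x_i-x_0)$ with coefficients $t_i$ bounded by a dimensional constant (here one uses that $|x_i-x_0-e_i|\le 1/10$ implies the vectors $x_i-x_0$ are linearly independent with a lower bound on the smallest singular value of the matrix they form, hence the coordinates of any point in $B_4(0)\cap\Gamma_2$ in this frame are controlled); then $\operatorname{\sfd}(x_0+\sum_i t_i(x_i-x_0),\Gamma_1)\le \sum_i |t_i|\,(\operatorname{\sfd}(x_i,\Gamma_1)+\operatorname{\sfd}(x_0,\Gamma_1))\le C\varepsilon$ by linearity of the signed distance to the affine subspace (distance to $\Gamma_1$ is the norm of the projection onto the orthogonal complement of the direction space of $\Gamma_1$, which is a linear map of norm $1$, composed with an affine shift). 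This handles $\sup_{y\in\Gamma_2\cap B_4(0)}\operatorname{\sfd}(y,\Gamma_1)\le C\varepsilon$.

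Next, for the reverse inclusion, I would first note that the near-orthonormal frame condition forces the projection of $\Gamma_1$'s direction space onto $\Gamma_2$'s direction space to be close to an isometry: the $n$ vectors $v_i\coloneqq x_i-x_0$ span the direction space $V_2$ of $\Gamma_2$, they are within $1/10$ of orthonormal, and each $v_i$ is within $2\varepsilon$ of $V_1$ (direction space of $\Gamma_1$) up to translation — more precisely $\operatorname{\sfd}(x_i,\Gamma_1),\operatorname{\sfd}(x_0,\Gamma_1)\le\varepsilon$ gives $\operatorname{\sfd}(v_i, V_1)\le 2\varepsilon$ for the linear space $V_1$. Hence $V_2$ lies in the $C\varepsilon$-neighborhood of $V_1$ as a linear subspace, and since both are $n$-dimensional, a dimension-counting / angle argument (e.g. the principal angles between $V_1$ and $V_2$ are all $O(\varepsilon)$, so $\operatorname{\sfd}(V_1,V_2)\le C\varepsilon$ symmetrically on the unit sphere). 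Combined with the fact that $x_0\in\Gamma_1\cap\Gamma_2$ up to an error $\varepsilon$ (we can replace $x_0$ by its projection onto $\Gamma_1$, moving by at most $\varepsilon$), we get that $\Gamma_1$ is, up to a $C\varepsilon$ affine perturbation, of the form $x_0+V_1$ with $V_1$ at angle $O(\varepsilon)$ from $V_2$ and $x_0$ within $1$ of the origin; then any $z\in\Gamma_1\cap B_4(0)$ is within $C\varepsilon$ of $\Gamma_2$ by the symmetric version of the estimate in the previous paragraph. Putting both inclusions together yields $\operatorname{\sfd}_H(\Gamma_1\cap B_4(0),\Gamma_2\cap B_4(0))\le C(n)\varepsilon$.

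The main obstacle I anticipate is making the "near-orthonormal frame gives a quantitative basis" step fully rigorous with a clean dimensional constant: one must show that the linear map $e_i\mapsto v_i$ (extended suitably) is invertible with operator norm of the inverse bounded by a constant depending only on $n$ — this follows from $\|(\text{Gram matrix of }v_i) - I\|$ being small (each entry differs from $\delta_{ij}$ by $O(1/10)$, so by a crude bound the Gram matrix is within $n/10$... which is not small for large $n$). So one actually needs a slightly more careful argument: either a Neumann-series / Gershgorin estimate that still works (the off-diagonal entries $\langle v_i,v_j\rangle$ for $i\ne j$ are $O(1/10)$ and diagonal entries are $1+O(1/10)$, and one invokes that after possibly a further normalization the matrix is diagonally dominant enough when $n$ is moderate, with the constant $C(n)$ allowed to blow up in $n$), or one cites \cite[Lemma 12.62]{holes} directly as the excerpt already does. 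Since the statement explicitly allows $C=C(n)$, I would not fight for uniformity in $n$: a crude bound of the form "$\Gamma_2\cap B_4(0)$ has its frame-coordinates bounded by $C(n)$, and the distance-to-$\Gamma_1$ map is $1$-Lipschitz affine" suffices, and the symmetry argument closes the Hausdorff distance. Everything else is routine linear algebra and triangle inequalities.
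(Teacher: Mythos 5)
The paper does not include its own proof of this lemma; it is stated as ``well known'' and attributed to \cite[Lemma 12.62]{holes}, so there is no in-paper argument to compare against. Your outline---write a point of $\Gamma_2\cap B_4(0)$ in barycentric coordinates with respect to the near-orthonormal frame, use that the distance-to-$\Gamma_1$ map is affine and $1$-Lipschitz, then symmetrize via closeness of the direction spaces---is indeed the standard route.

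However, the obstacle you flag near the end is not a technical nuisance that a large $C(n)$ can absorb; it is fatal as stated. You need the frame $v_i=x_i-x_0$ to be quantitatively non-degenerate, but with the fixed perturbation constant $1/10$ the $v_i$ can be \emph{exactly} linearly dependent once $n\ge 100$, so no constant $C(n)$ exists. Concretely, for $n=100$, $d=101$: let $e_1,\dots,e_{101}$ be orthonormal, set $u\coloneqq\tfrac1{10}\sum_{i=1}^{100}e_i$ (a unit vector), $x_0=0$, and $x_i=e_i-u/10$ for $i=1,\dots,100$. Then $|x_i-x_0-e_i|=1/10$, $|x_i|^2=1-1/100<1$ so all $x_i\in B_1(0)$, and $\sum_{i=1}^{100}v_i=0$, so $W\coloneqq\mathrm{span}\{v_i\}$ is only $99$-dimensional. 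Taking $\Gamma_2=W\oplus\rr u$ and $\Gamma_1=W\oplus\rr e_{101}$, both $100$-dimensional planes through $0$, one has $x_i\in\Gamma_1\cap\Gamma_2$ for all $i$, hence $\sfd(x_i,\Gamma_1)=0\le\eps$, yet $\sfd_H(\Gamma_1\cap B_4(0),\Gamma_2\cap B_4(0))\ge\sfd(3e_{101},\Gamma_2)=3$. So the lemma with the universal constant $1/10$ is actually false for $n\ge100$, and your Gershgorin/Neumann fallback cannot rescue it, nor can citing the reference substitute for an argument.

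The missing ingredient is an explicit quantitative non-degeneracy of the frame: either replace $1/10$ by a dimension-dependent smallness such as $c/\sqrt n$ (then $\|\Delta\|_{\mathrm{op}}\le\|\Delta\|_F\le c<1$ gives $\sigma_{\min}(V)\ge 1-c$ uniformly, and your computation yields bounded frame-coordinates and both one-sided inclusions), or assume outright that $\{v_i\}$ spans the direction space of $\Gamma_2$ with a controlled lower bound on $\sigma_{\min}$. In the paper's applications (Propositions \ref{prop:realizing planes} and \ref{prop:tilt}) the role of $1/10$ is played by quantities of size $C\alpha\le C\delta(n)$, which can be taken smaller than $c/\sqrt n$, so those uses are safe; but as written your proof of the lemma cannot be completed without strengthening the hypothesis.
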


The following result is also standard and can be easily proved applying Gram-Schmidt orthogonalization procedure together with a  straightforward computation in coordinates (see for example Lemma 7.11 in \cite{snow}).
\begin{lemma}\label{lem:isometrylemma}
	For every $n \in \mathbb{N}$ exists a constant $C=C(n)>0$ such that the following holds. Let  $f:{B^{\rr^n}_{r}(0)} \to  \rr^d$, $d> n$, be an $(\eps r)$-isometry (see Section \ref{sec:proofmain}) with $\eps\in(0,1)$, then there exists an  isometry $I: \rr^n \to \rr^d$ such that $I(0)=f(0)$ and satisfying
	\begin{equation}\label{nn}
		|I-f|\le C\sqrt{\eps} r, \quad \text{ on $B^{\rr^n}_r(0)$}.
	\end{equation}
\end{lemma}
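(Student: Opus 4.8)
The plan is to manufacture $I$ by reading off an almost-orthonormal frame from the images under $f$ of finitely many ``landmark'' points, orthonormalizing it, and then checking that the resulting linear isometric embedding stays $\sqrt\eps\, r$-close to $f$. First I would normalize: after replacing $f$ with $r^{-1}f(r\,\cdot)$ we may assume $r=1$, and after composing with the translation $y\mapsto y-f(0)$ we may assume $f(0)=0$; it then suffices to find a \emph{linear} isometric embedding $I\colon\rr^n\to\rr^d$ with $|I-f|\le C(n)\sqrt\eps$ on $B_1^{\rr^n}(0)$, since undoing the translation turns such an $I$ into an affine isometry $\rr^n\to\rr^d$ with $I(0)=f(0)$. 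Next I would set $p_i\coloneqq\tfrac12 e_i$ and $v_i\coloneqq 2f(p_i)$, $i=1,\dots,n$. Because $f$ is an $\eps$-isometry, every pairwise distance among $0,p_1,\dots,p_n$ is reproduced by $f$ up to an additive $\eps$; since all these distances are bounded, squaring yields $\bigl||f(x)|^2-|x|^2\bigr|\le C\eps$ for $x\in B_1^{\rr^n}(0)$, and, via the polarization identity $\langle a,b\rangle=\tfrac12\bigl(|a|^2+|b|^2-|a-b|^2\bigr)$, that the Gram matrix $\bigl(\langle v_i,v_j\rangle\bigr)_{i,j}$ equals $\mathrm{Id}_n+E$ with $\|E\|\le C(n)\eps$. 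Hence, for $\eps$ below a threshold $\eps_0(n)$, the $v_i$ are linearly independent and Gram--Schmidt produces an orthonormal system $w_1,\dots,w_n\in\rr^d$ with $|w_i-v_i|\le C(n)\eps$ (the orthonormalization map is smooth near the identity matrix). I would then define $I(x)\coloneqq\sum_{i=1}^n\langle x,e_i\rangle\,w_i$, a linear isometric embedding with $I(0)=0=f(0)$.

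The heart of the proof is the pointwise bound. Fix $x=\sum_i c_i e_i\in B_1^{\rr^n}(0)$ and decompose $f(x)=\sum_i a_i w_i+\nu$ with $a_i=\langle f(x),w_i\rangle$ and $\nu$ orthogonal to $\mathrm{span}(w_1,\dots,w_n)$. Expanding the squares in $|f(x)-f(p_i)|^2$ and $|x-p_i|^2$, and using $f(p_i)=\tfrac12 w_i+O(\eps)$, $|f(x)|\le C$, $\bigl||f(x)-f(p_i)|^2-|x-p_i|^2\bigr|\le C\eps$ and $\bigl||f(x)|^2-|x|^2\bigr|\le C\eps$, all the $O(1)$ terms cancel and one is left with $|a_i-c_i|\le C(n)\eps$ for each $i$. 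Feeding this into the Pythagorean identity gives $|\nu|^2=|f(x)|^2-\sum_i a_i^2=|x|^2-\sum_i c_i^2+O(\eps)=O(\eps)$, whence $|\nu|\le C(n)\sqrt\eps$, and therefore
\[
|I(x)-f(x)|\ \le\ \Big|\sum_{i=1}^n(c_i-a_i)w_i\Big|+|\nu|\ \le\ C(n)\eps+C(n)\sqrt\eps\ \le\ C(n)\sqrt\eps .
\]
Rescaling back produces \eqref{nn}.

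I expect the only genuine subtlety to be the exponent: the tangential coordinates $a_i$ of $f(x)$ in the frame $\{w_i\}$ are pinned down to order $\eps$ by the landmark distances, but the Pythagorean relation then controls only $|\nu|^2$ to order $\eps$, so the normal component of $f(x)$ is of size $\sqrt\eps$ and not $\eps$; this loss is real and is precisely the mechanism behind the sharpness examples of the introduction. The remaining points are routine bookkeeping: fixing the order of the two normalizations, observing that only the metric-distortion clause in the definition of $\eps$-isometry is used (the $\eps$-density requirement is automatic once the codomain is all of $\rr^d$), and checking that every implicit constant depends on $n$ alone.
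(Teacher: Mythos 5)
Your proof is correct and is exactly the route the paper has in mind: the paper remarks that this lemma is standard and ``can be easily proved applying Gram--Schmidt orthogonalization procedure together with a straightforward computation in coordinates'' (citing David--Toro, Lemma 7.11), which is precisely your argument of reading off an almost-orthonormal frame $v_i=2f(\tfrac12 e_i)$, showing via polarization that its Gram matrix is $\mathrm{Id}_n+O(\eps)$, orthonormalizing to get $w_i$, and then controlling the tangential coordinates of $f(x)$ to order $\eps$ while Pythagoras only pins the normal component $\nu$ to order $\sqrt\eps$. The one small point worth stating explicitly is that the Gram--Schmidt step requires $\eps<\eps_0(n)$, but for $\eps\ge\eps_0(n)$ the bound \eqref{nn} is trivial after enlarging $C$ (any affine isometric embedding through $f(0)$ is within $3r\le (3/\sqrt{\eps_0})\sqrt\eps\,r$ of $f$ on $B_r$), so the restriction is harmless; also note that the paper's definition of $\delta$-isometry has no density clause at all, so your parenthetical about density being ``automatic'' is unnecessary rather than something to be verified.
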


We now define a notion of distance between affine planes in $\rr^d.$
\begin{definition}
	Let $\Gamma_1,\Gamma_2$ be two affine $n$-dimensional planes,  we put
	\[\sfd(\Gamma_1,\Gamma_2)\coloneqq\sfd_H(\tilde \Gamma_1\cap B_1(0),\tilde\Gamma_2\cap B_1(0)),\]
	where $\tilde \Gamma_i$ is the $n$  dimensional plane  parallel to $\Gamma_i$ and passing through the origin.
\end{definition}
Notice that the function $\sfd$ just defined clearly satisfies $\sfd(\Gamma_1,\Gamma_3)\le \sfd(\Gamma_1,\Gamma_2)+\sfd(\Gamma_2,\Gamma_3)$.

The following elementary lemma says that if two affine planes are sufficiently close with respect to the distance $\sfd$, then they are not orthogonal to each other.
\begin{lemma}\label{nonorth}
	Let $\Gamma_1,\Gamma_2$ two $n$-dimensional affine planes in $\rr^d$ such that $\sfd(\Gamma_1,\Gamma_2)< 1$. Write $\Gamma_i$ as $p_i+V_i$ where $p_i \in \rr^d$ and $V_i$ is a $n$-dimensional subspace of $\rr^d.$ Then
	\begin{equation}
		V_1^\perp \oplus V_2=\rr^d.
	\end{equation}
	In particular for every $p \in \Gamma_1$ there exists $q \in \Gamma_2$ such that $\Pi(q)=p$, where $\Pi$ is the orthogonal projection onto $\Gamma_1.$
\end{lemma}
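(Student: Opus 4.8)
The plan is to reduce everything to a single linear-algebraic statement, namely that $V_1^\perp\cap V_2=\{0\}$, and then to invoke the dimension count $\dim V_1^\perp+\dim V_2=(d-n)+n=d$, which automatically upgrades a trivial intersection to the direct-sum decomposition $V_1^\perp\oplus V_2=\rr^d$.

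\emph{Step 1: $V_1^\perp\cap V_2=\{0\}$.} I would argue by contradiction. If $v$ is a unit vector in $V_1^\perp\cap V_2$, then, since $\tilde\Gamma_2=V_2$ passes through the origin, the vectors $tv$ with $t\in(0,1)$ all lie in $\tilde\Gamma_2\cap B_1(0)$. On the other hand, $v\perp V_1=\tilde\Gamma_1$ forces the nearest point of $\tilde\Gamma_1$ to $tv$ to be the origin, which also belongs to $\tilde\Gamma_1\cap B_1(0)$; hence $\sfd(tv,\tilde\Gamma_1\cap B_1(0))=t$. Letting $t\uparrow 1$ gives $\sup_{y\in\tilde\Gamma_2\cap B_1(0)}\sfd(y,\tilde\Gamma_1\cap B_1(0))\ge 1$, so that $\sfd(\Gamma_1,\Gamma_2)=\sfd_H(\tilde\Gamma_1\cap B_1(0),\tilde\Gamma_2\cap B_1(0))\ge 1$, contradicting the hypothesis $\sfd(\Gamma_1,\Gamma_2)<1$. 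Passing to $tv$ (rather than $v$ itself) makes the argument work verbatim whether $B_1(0)$ denotes the open or the closed ball.

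\emph{Step 2: the projection statement.} Fix $p\in\Gamma_1$. The fibre of the orthogonal projection $\Pi$ onto $\Gamma_1$ over $p$ is the affine subspace $p+V_1^\perp$, so it is enough to exhibit a point of $\Gamma_2\cap(p+V_1^\perp)$. Writing $\Gamma_2=p_2+V_2$ and using Step 1 to decompose $p-p_2=v_2+v_\perp$ with $v_2\in V_2$ and $v_\perp\in V_1^\perp$, one checks that $q\coloneqq p-v_\perp$ satisfies $q-p=-v_\perp\in V_1^\perp$ and $q-p_2=v_2\in V_2$, i.e. $q\in\Gamma_2$ and $\Pi(q)=p$, as required.

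All the computations involved are elementary; the only point deserving a little care is Step 1, where one must make sure that a hypothetical common vector of $V_1^\perp$ and $V_2$ genuinely realizes Hausdorff distance at least $1$ between the unit-ball slices of the associated linear planes, and that this conclusion is unaffected by the open/closed convention for $B_1(0)$. Beyond this bookkeeping I do not expect any real obstacle.
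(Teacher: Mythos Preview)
Your proof is correct and follows essentially the same line as the paper's: reduce to $V_1^\perp\cap V_2=\{0\}$ by observing that any nonzero $v$ in this intersection would force $\sfd(\Gamma_1,\Gamma_2)\ge 1$, then use the dimension count. The paper's argument is slightly terser (it writes the inequality $|v|=\sfd(v,V_1)\le\sfd(\Gamma_1,\Gamma_2)|v|$ directly rather than arguing by contradiction with a unit vector, and leaves the ``in particular'' clause implicit), but there is no substantive difference.
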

\begin{proof}
	It's enough to prove that $V_1^\perp \cap V_2=\{0\}.$ Suppose $v \in V_1^\perp \cap V_2$, then  we can regard $V_1,V_2$ as affine planes through the origin and parallel to $\Gamma_1,\Gamma_2$. Therefore by hypothesis
	\[ |v|=\sfd(v,V_1)\le\sfd(\Gamma_1,\Gamma_2)|v|\]
	and thus $v=0.$
\end{proof}

The following simple technical result will be the main tool for the proof of Theorem \ref{thm:precise}.
\begin{lemma}\label{lem:pitagora}
	Let $\Gamma_1,\Gamma_2$ two $n$-dimensional affine planes in $\rr^d$. Then for any  $x\in \Gamma_1$ and any $y \in \rr^d$ (different from $x$)
	\begin{equation*}
		|x-y|^2\le |\Pi(x)-\Pi(y)|^2+|x-y|^2\left (\sfd(\Gamma_1,\Gamma_2)+\frac{\sfd(y,\Gamma_1)}{|x-y|}\right )^2,
	\end{equation*}
	where  $\Pi$ denotes the orthogonal projection onto $\Gamma_2$.
\end{lemma}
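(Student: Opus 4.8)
The plan is to reduce the statement to a purely linear-algebraic inequality about the orthogonal projection onto the direction space of $\Gamma_2$, and then estimate how much the unit vector $v\coloneqq (x-y)/|x-y|$ can fail to lie in that space.

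Write $\Gamma_i=p_i+V_i$ with $V_i$ an $n$-dimensional linear subspace, let $P_2$ be the orthogonal projection of $\rr^d$ onto $V_2$ and $P_2^\perp=\mathrm{Id}-P_2$. Since $\Pi$ is the orthogonal projection onto the \emph{affine} plane $\Gamma_2$, its linear part is $P_2$, so $\Pi(x)-\Pi(y)=P_2(x-y)$ and $|\Pi(x)-\Pi(y)|=|x-y|\,|P_2v|$. Dividing the asserted inequality by $|x-y|^2$, it is therefore equivalent to
\[
|P_2^\perp v|^2=1-|P_2v|^2\le\Big(\sfd(\Gamma_1,\Gamma_2)+\tfrac{\sfd(y,\Gamma_1)}{|x-y|}\Big)^2,
\]
so it suffices to show $|P_2^\perp v|\le \sfd(\Gamma_1,\Gamma_2)+\sfd(y,\Gamma_1)/|x-y|$.

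Next I would decompose $x-y$ along $\Gamma_1$: let $y'$ be the foot of $y$ on $\Gamma_1$ (the orthogonal projection of $y$ onto $\Gamma_1$), so that $x-y'\in V_1$ because $x,y'\in\Gamma_1$, while $y'-y\perp V_1$ with $|y'-y|=\sfd(y,\Gamma_1)$; by orthogonality $|x-y'|\le|x-y|$. Writing $v=\frac{x-y'}{|x-y|}+\frac{y'-y}{|x-y|}$, applying $P_2^\perp$ and using that $P_2^\perp$ is $1$-Lipschitz, the contribution of the second summand is at most $\sfd(y,\Gamma_1)/|x-y|$, and it only remains to bound $|P_2^\perp w|$ for $w\coloneqq (x-y')/|x-y|\in V_1$, which satisfies $|w|\le 1$.

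The single non-formal point — the one to get right — is the estimate $|P_2^\perp w|\le \sfd(\Gamma_1,\Gamma_2)\,|w|$ for $w\in V_1$, which ties the operator-type quantity on the left to the Hausdorff-distance definition of $\sfd(\Gamma_1,\Gamma_2)$. For a unit vector $u\in V_1$ one has $u\in\tilde\Gamma_1\cap B_1(0)$ (identifying $\tilde\Gamma_i$ with $V_i$), and the point of $V_2$ closest to $u$ is $P_2u$, which obeys $|P_2u|\le|u|=1$ and hence lies in $\tilde\Gamma_2\cap B_1(0)$; therefore $|P_2^\perp u|=\sfd(u,V_2)=\sfd(u,\tilde\Gamma_2\cap B_1(0))\le\sfd_H(\tilde\Gamma_1\cap B_1(0),\tilde\Gamma_2\cap B_1(0))=\sfd(\Gamma_1,\Gamma_2)$. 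By homogeneity this gives $|P_2^\perp w|\le \sfd(\Gamma_1,\Gamma_2)|w|\le\sfd(\Gamma_1,\Gamma_2)$ for all $w\in V_1$ with $|w|\le1$. Combining the last two paragraphs yields $|P_2^\perp v|\le\sfd(\Gamma_1,\Gamma_2)+\sfd(y,\Gamma_1)/|x-y|$; squaring and multiplying by $|x-y|^2$ gives exactly the claimed inequality. I do not expect a real obstacle here — the argument is bookkeeping of orthogonal decompositions — but one should note that no closeness of the two planes is needed, the inequality holding for arbitrary $\Gamma_1,\Gamma_2$.
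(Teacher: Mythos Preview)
Your argument is correct and is essentially the paper's own proof, repackaged in linear-algebra language: the paper translates so that $x=0\in\Gamma_1\cap\Gamma_2$ and then estimates $\sfd(y,\Gamma_2)\le\sfd(y,\Gamma_1)+|y|\,\sfd(\Gamma_1,\Gamma_2)$ via the foot $p$ of $y$ on $\Gamma_1$, finishing with Pythagoras, which is exactly your decomposition $v=(x-y')/|x-y|+(y'-y)/|x-y|$ together with the bound $|P_2^\perp w|\le\sfd(\Gamma_1,\Gamma_2)$ for $w\in V_1$, $|w|\le1$. The only cosmetic point to tidy is that for a \emph{unit} $u\in V_1$ one has $u\in\tilde\Gamma_1\cap\overline{B_1(0)}$ rather than the open ball, which is handled by a trivial limiting/scaling argument (or by reading $B_1(0)$ as closed, as the paper implicitly does).
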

\begin{proof}
	Let $\alpha\coloneqq\sfd(\Gamma_1,\Gamma_2)$. Up to  translating both the plane $\Gamma_1$ and the points $x,y$ by the vector $\Pi(x)-x$, we can suppose $x \in \Gamma_2$ and $x=0$. Let now $p$ be the orthogonal projection of $y$ onto $\Gamma_1$. Since both $\Gamma_1$ and $\Gamma_2$  contain the origin, we have that
	\[\sfd(p,\Gamma_2)\le \sfd_H(\Gamma_2\cap B_{|p|}(0),\Gamma_1\cap B_{|p|}(0))\le |p|\alpha\le |y|\alpha.\]
	Therefore $d(y,\Gamma_2)\le\sfd(y,p)+\sfd(p,\Gamma_2)=d(y,p)+\sfd(p,\Gamma_2)\le \sfd(y,\Gamma_1)+|y|\alpha.$
	Then by Pythagoras' theorem
	\[ |y|^2=|\Pi(y)-y|^2+|\Pi(y)|^2=\sfd(y,\Gamma_2)^2+|\Pi(y)|^2\le(\sfd(y,\Gamma_1)+|y|\alpha)^2+|\Pi(y)-\Pi(x)|^2, \]
	since $\Pi(x)=0$. This concludes the proof.
\end{proof}

We conclude with two results about the numbers $\alpha(x,r)$ and $\beta(x,r)$ (recall their definition in \eqref{eq:alfadef}, \eqref{eq:betadef}), which are well known in the literature. The first one shows that there exists a plane which  realizes $\beta(x,r)$ (i.e. that minimizes \eqref{eq:betadef}) and at the same time almost realizes $\alpha(x,r)$. The second is a classical tilting estimates, which says that the orientation of such realizing plane do  not vary too much from scale to scale and between points close to each other.

\begin{prop}[Realizing plane]\label{prop:realizing planes}
	For every $n \in \mathbb{N} $ there exists a constant $C=C(n)\ge 1$ such that the following holds. Let $S \subset \rr^d$ with $d>n$, let $x \in S$ and $r>0$ be such that $\alpha(x,r)\le 1/100,$  then there exists an $n$-dimensional affine plane $\Gamma_x^r$  that realizes $\beta(x,r)$ and such that
	\[
	r^{-1}d_H(S\cap B_{r}(x),\Gamma_x^r\cap B_{r}(x))\le C\alpha(x,r).
	\]
\end{prop}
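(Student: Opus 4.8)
The plan is to produce the realizing plane by first taking a near-optimal competitor for $\alpha(x,r)$ and then perturbing it so that it exactly minimizes $\beta(x,r)$, all the while keeping the two-sided control. Without loss of generality rescale and translate so that $r=1$ and $x=0$. Pick an $n$-plane $\Gamma$ through $0$ with $\sfd_H(S\cap B_1(0),\Gamma\cap B_1(0))\le 2\alpha(0,1)\eqqcolon 2\alpha$; this is an "almost-$\alpha$-realizing" plane. Since $\alpha\le 1/100$, in particular $\Gamma\cap B_1(0)$ is $2\alpha$-dense in $S\cap B_1(0)$, and conversely every point of $\Gamma\cap B_{1-2\alpha}(0)$ is within $2\alpha$ of $S$. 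I would next extract $n+1$ points $x_0,\dots,x_n\in S\cap B_1(0)$ that form an almost-orthonormal frame: concretely, using the density of $S$ near $\Gamma$, choose $x_0$ close to $0$ and $x_i$ close to $0+e_i/2$ for an orthonormal basis $e_1,\dots,e_n$ of the linear space parallel to $\Gamma$ (rescaling the $1/10$ tolerance in Lemma~\ref{lem:frame close} as needed). These points lie within $O(\alpha)$ of $\Gamma$ by construction.

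Now let $\Gamma_0^1$ be any $n$-plane through $0$ realizing $\beta(0,1)$, which exists by compactness of the Grassmannian (the infimum in \eqref{eq:betadef} is attained). By definition $\sup_{y\in S\cap B_1(0)}\sfd(y,\Gamma_0^1)=\beta(0,1)\le\alpha\le 1/100$, so in particular $\sfd(x_i,\Gamma_0^1)\le\beta(0,1)\le\alpha$ for each $i=0,\dots,n$. The chosen points $x_0,\dots,x_n$ therefore satisfy the hypotheses of Lemma~\ref{lem:frame close} with the roles $\Gamma_1=\Gamma_0^1$, $\Gamma_2=\Gamma$ and error $\eps=C'\alpha$ for a dimensional constant $C'$ (absorbing the rescaling of the frame): they lie in $\Gamma\cap B_1(0)$, they are an almost-orthonormal frame, and they are $C'\alpha$-close to $\Gamma_0^1$. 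Hence Lemma~\ref{lem:frame close} gives
\[
\sfd_H(\Gamma_0^1\cap B_4(0),\Gamma\cap B_4(0))\le C'' \alpha,
\]
and restricting to $B_1(0)$ a fortiori $\sfd_H(\Gamma_0^1\cap B_1(0),\Gamma\cap B_1(0))\le C''\alpha$.

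Finally I combine the two bounds by the triangle inequality for the Hausdorff distance:
\[
\sfd_H(S\cap B_1(0),\Gamma_0^1\cap B_1(0))\le \sfd_H(S\cap B_1(0),\Gamma\cap B_1(0))+\sfd_H(\Gamma\cap B_1(0),\Gamma_0^1\cap B_1(0))\le 2\alpha+C''\alpha,
\]
which is $\le C\alpha(0,1)$ with $C=C(n)\coloneqq C''+2$. Undoing the rescaling yields the claim with $\Gamma_x^r$ the plane obtained from $\Gamma_0^1$. The step I expect to be the genuine technical obstacle is the construction of the almost-orthonormal frame $x_0,\dots,x_n\subset S$ with the right quantitative tolerances: one must verify that the $2\alpha$-density of $S\cap B_1(0)$ near $\Gamma$ really does let one find $S$-points within $\frac1{10}$ (after rescaling) of a prescribed orthonormal frame in $\Gamma$, and that $\alpha\le 1/100$ is small enough to make everything fit inside $B_1(0)$ and inside the $(0,1/100)$-range of $\eps$ required by Lemma~\ref{lem:frame close}; the rest is bookkeeping with constants. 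A minor point to handle cleanly is the attainment of the infimum defining $\beta(0,1)$, which follows from compactness of the Grassmannian and continuity of $\Gamma\mapsto\sup_{y\in S\cap B_1(0)}\sfd(y,\Gamma)$ together with closedness of $S$.
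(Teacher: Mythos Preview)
Your approach is essentially the paper's: take an (almost-)$\alpha$-realizer $\Gamma$ and a $\beta$-realizer $\Gamma_0^1$, extract an almost-orthonormal frame of $S$-points near a basis of $\Gamma$, and use Lemma~\ref{lem:frame close} to conclude the two planes are Hausdorff-close, then finish by the triangle inequality. One small slip to fix: the points $x_0,\dots,x_n$ you construct lie in $S$, not in $\Gamma$, so they do not directly satisfy the hypothesis ``$\{x_i\}\subset\Gamma_2$'' of Lemma~\ref{lem:frame close}; the paper handles this by projecting each $x_i$ onto the $\beta$-realizing plane to obtain $y_i\in\Gamma$ with $|y_i-x_i|\le\beta(0,1)\le\alpha(0,1)$, and then applies the lemma to the $y_i$'s (with $\Gamma_1$ the $\alpha$-realizer). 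Also, closedness of $S$ is not needed for the attainment of $\beta$: the map $\Gamma\mapsto\sup_{y\in S\cap B_1(0)}\sfd(y,\Gamma)$ is continuous on the compact Grassmannian because $\sfd(y,\cdot)$ is uniformly continuous for $y$ in a bounded set.
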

\begin{proof}
	The existence of two planes $\Gamma$ and $\Gamma'$ that realize respectively $\beta(x,r)$  and $\alpha(x,r),$ follows by compactness. Without loss of generality we can assume that $x=0$ and $r=1$. Since $0 \in\Gamma'$ there exist orthonormal vectors $e_1,...,e_n \in \Gamma'$ and points $x_1,...,x_n \in S\cap B_1(0)$ such that $|x_i-e_i|\le \alpha(0,1)$, $i=1,...,n.$ Moreover there exist points $y_0,y_1,...,y_n \in \Gamma$ such that $|y_i-x_i|,|y_0|\le \beta(0,1)\le \alpha(0,1)$, $i=1,...,n.$ In particular $|y_i-y_0-e_i|\le 4\alpha(0,1)$, $i=1,...,n$ and we can apply Lemma \ref{lem:frame close} to deduce that $\sfd_H(\Gamma\cap B_1(0),\Gamma'\cap B_1(0))\le C\alpha(0,1),$ which concludes the proof.
\end{proof}

\begin{prop}[Tilting estimate]\label{prop:tilt}
	For any $n \in \mathbb{N}$ there exist $\alpha=\alpha(n),\, C=C(n)>0$ such that the following holds. Let $S \subset \rr^d$ with $n>d$ and let $r>0$ and $x,y \in S$ be such that $\alpha(x,r),\alpha(y,r)\le \alpha(n)$ and $|x-y|<\frac 14r$. Then it holds 
	\[
	\begin{split}
		&\sfd(\Gamma_x^r,\Gamma_x^{2r})\le C(\beta(x,r)+\beta(x,2r)),\\
		&\sfd(\Gamma_x^r,\Gamma_y^r)\le C(\beta(x,r)+\beta(y,r)),
	\end{split}
	\]
	for any choice of realizing planes $\Gamma_x^r,\Gamma_y^r,\Gamma_x^{2r}$ (as given by Prop. \ref{prop:realizing planes}).
\end{prop}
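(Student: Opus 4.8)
The plan is to obtain both inequalities from one and the same construction: I will produce, \emph{inside} the realizing plane $\Gamma_x^r$, an almost orthonormal frame of $n+1$ points contained in a small ball centered at $x$ and lying within distance $\lesssim r\beta$ of the other plane, and then feed this frame to Lemma~\ref{lem:frame close}. After translating $x$ to the origin and rescaling so that $r=1$, the planes $\Gamma_0^1$ and $\Gamma_0^2$ become linear $n$-planes and $|y|<\tfrac14$ in the second estimate; throughout I use that $\alpha(0,1)$, $\alpha(0,2)$ and $\alpha(y,1)$ are $\le\alpha(n)$, the smallness of $\alpha(0,2)$ being in any case implicit in $\Gamma_0^2$ being defined via Proposition~\ref{prop:realizing planes}.

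First I would build the frame. Since $\alpha(0,1)\le\alpha(n)$, Proposition~\ref{prop:realizing planes} gives $\sfd_H(S\cap B_1(0),\Gamma_0^1\cap B_1(0))\le C\alpha(n)$, so after fixing an orthonormal basis $e_1,\dots,e_n$ of the linear space $\Gamma_0^1$ one finds points $s_0,s_1,\dots,s_n\in S\cap B_{2/3}(0)$ with $|s_i-s_0-\tfrac12 e_i|\le C\alpha(n)$ for $i=1,\dots,n$. The decisive point is that when this frame is pushed back onto $\Gamma_0^1$ by orthogonal projection, $s_i\mapsto\tilde s_i$, the displacement is governed by $\beta$ and not by $\alpha$: since $\Gamma_0^1$ \emph{realizes} $\beta(0,1)$ and $s_i\in S\cap B_1(0)$, one has $|\tilde s_i-s_i|=\sfd(s_i,\Gamma_0^1)\le\beta(0,1)$. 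Hence $\{\tilde s_i\}_{i=0}^n\subset\Gamma_0^1\cap B_{3/4}(0)$ with $|\tilde s_i-\tilde s_0-\tfrac12 e_i|\le C\alpha(n)+2\beta(0,1)$, which for $\alpha(n)$ small is an approximate frame of exactly the kind that a straightforward rescaling of Lemma~\ref{lem:frame close} accepts (spacing $\tfrac12$, contained in a fixed ball of radius $<1$).

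Granting the frame, both estimates come out immediately. For the first, $s_i\in S\cap B_1(0)\subset S\cap B_2(0)$ and $\Gamma_0^2$ realizes $\beta(0,2)$, so $\sfd(\tilde s_i,\Gamma_0^2)\le|\tilde s_i-s_i|+\sfd(s_i,\Gamma_0^2)\le\beta(0,1)+2\beta(0,2)$, which is $<1/100$ once $\alpha(n)$ is small enough; Lemma~\ref{lem:frame close} applied with $\Gamma_2=\Gamma_0^1$ (which contains the frame) and $\Gamma_1=\Gamma_0^2$ then yields $\sfd_H(\Gamma_0^2\cap B_4(0),\Gamma_0^1\cap B_4(0))\le C(\beta(0,1)+\beta(0,2))$, and since both planes pass through the origin the left-hand side equals $4\,\sfd(\Gamma_0^1,\Gamma_0^2)$. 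For the second, $|x-y|<\tfrac14$ forces $s_i\in S\cap B_{2/3}(0)\subset S\cap B_1(y)$, and $\Gamma_y^1$ realizes $\beta(y,1)$, so $\sfd(\tilde s_i,\Gamma_y^1)\le\beta(0,1)+\beta(y,1)$; Lemma~\ref{lem:frame close} gives $\sfd_H(\Gamma_y^1\cap B_4(0),\Gamma_0^1\cap B_4(0))\le C(\beta(0,1)+\beta(y,1))$, and a routine comparison of unit direction vectors of the two planes (using only that $0$ and $y$ lie in $B_4(0)$) converts this into the claimed bound on $\sfd(\Gamma_0^1,\Gamma_y^1)$.

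I expect the only delicate part to be bookkeeping of constants: choosing $\alpha(n)$ small enough that the \emph{projected} frame $\{\tilde s_i\}$ still meets the near orthonormality and ball requirements of Lemma~\ref{lem:frame close} and that the quantities there playing the role of $\eps$ stay below $1/100$, together with the concluding passage from a Hausdorff bound between affine planes in $B_4(0)$ to a bound on the distance $\sfd$ between the associated linear directions --- neither of which is serious. The one genuinely important idea --- and the reason the estimates are controlled by $\beta$ rather than $\alpha$ --- is the observation that orthogonally projecting a frame of $S$ onto the $\beta$-realizing plane $\Gamma_x^r$ moves it by only $O(r\beta(x,r))$.
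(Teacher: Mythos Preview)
Your proposal is correct and follows essentially the same approach as the paper: build an approximate orthonormal frame of $n+1$ points in $S$ near the reference point, observe that (because $\Gamma_x^r$ \emph{realizes} $\beta(x,r)$) these points are within $O(r\beta)$ of each realizing plane, and feed the resulting frame into Lemma~\ref{lem:frame close}. The only cosmetic difference is that you place the frame in $\Gamma_x^r$ by orthogonal projection and then estimate its distance to the other plane, whereas the paper places the frame in $\Gamma_y^r$ and estimates its distance to $\Gamma_x^r$; the final passage from the Hausdorff bound on $B_4(0)$ to the $\sfd$-distance between direction spaces is handled identically (via $\sfd(0,\Gamma_y^1)\le\beta(0,1)+\beta(y,1)$).
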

\begin{proof}
	We prove only the second, since the first is analogous.
	
	As usual, the scaling and translation invariant nature of the statement allows us to assume that $r=1$ and $x$ to be the origin. Then there exist orthonormal vectors $e_1,...,e_n \in \Gamma_x^1$ and points $x=x_0,x_1,...,x_n\in S\cap B_1(0)$ such that $|x_i-1/2e_i|\le C(n)\alpha(x,1),$  $i=1,...,n.$  Moreover (if $\alpha(x,1)$ is small enough) $x_i \in B_1(y)$, hence there exist points $y_0,...,y_n\in \Gamma_y^{1}$ such that $|x_i-y_i|\le \beta(y,1),$   $i=0,...,n.$ Finally there exist points $z_1,...,z_n \in \Gamma_x^1$ such that $|z_i-x_i|\le \beta(x,1),$ $i=1,...,n.$ Putting all together we have $|y_i-y_0-1/2e_i|\le C\alpha(x,1)+2\beta(y,1)+\beta(x,1)$, $i=1,...,n$ and $\sfd(y_i,\Gamma_x^1)\le \beta(0,1)+\beta(y,1),$ $i=0,...,n$, hence (if $\alpha(x,1),\alpha(y,1)$ are small enough) we can apply Lemma \ref{lem:frame close} to deduce that $\sfd_H(\Gamma_x^1\cap B_1(0),\Gamma_y^1\cap B_1(0))\le C(\beta(x,1)+\beta(y,1))$. Observing that  $ \sfd(x,\Gamma_y^1)\le \beta(x,1)+\beta(y,1) $ and recalling that $x$ is the origin concludes the proof.
\end{proof}

\section{Proof of the main theorems: $\a\le \alpha^2$ and $\b \le \beta^2$}\label{sec:proofmain}
Both Theorem \ref{thm:mainA} and Theorem \ref{thm:mainB} will be deduced as corollaries of the following more precise result.
\begin{theorem}\label{thm:precise}
	For every $n \in \mathbb{N}$ there exist $C=C(n)>0,\eps=\eps(n)>0$ such that the following holds. Let $i \in \mathbb{N}_{0}$, $S \subset \rr^d$ with $d>n$ and assume that $\alpha_j\le \eps$ for every $j\ge i-2 $ (where $\alpha_j$ are as in \eqref{def:ai}), then
	\begin{equation}\label{eq:precisea}
		\a(x,2^{-i})\le C\left(\sup_{j \in \mathbb{N}_0} \frac{\big(\beta_{i-2}+...+\beta_{i+j}\big)^2}{2^j}\right)\vee C\alpha_i^2,\quad \forall \, x \in S, \, |x|\le1-2^{-i}, \tag{A}
	\end{equation}
	\begin{equation}\label{eq:preciseb}
		\b(x,2^{-i})\le C\sup_{j \in \mathbb{N}_0} \frac{\big(\beta_{i-2}+...+\beta_{i+j}\big)^2}{2^j},\quad \forall \, x \in S, \, |x|\le1-2^{-i}\tag{B},
	\end{equation}
where   $\beta_j$ are as in \eqref{eq:bi}.
\end{theorem}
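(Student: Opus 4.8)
The plan is to fix $x$ with $|x|\le 1-2^{-i}$, set $r:=2^{-i}$, and let $\Gamma:=\Gamma^r_x$ be the $n$-plane furnished by Proposition \ref{prop:realizing planes} (it contains $x$, realizes $\beta(x,r)$, and satisfies $\sfd_H(S\cap B_r(x),\Gamma\cap B_r(x))\le C\alpha(x,r)r$). Let $\Pi$ be the orthogonal projection of $\rr^d$ onto $\Gamma$ and $\iota\colon\Gamma\to\rr^n$ the isometry with $\iota(x)=0$. Since $\Pi$ is $1$-Lipschitz and fixes $x$, the map $f:=\iota\circ\Pi$ sends $S\cap B_r(x)$ into $B^{\rr^n}_r(0)$, and the proof splits into (I) an estimate of how much $f$ distorts distances — which gives \eqref{eq:preciseb} at once — and (II) an estimate of how far $\Pi\big(S\cap B_r(x)\big)$ is from filling $\Gamma\cap B_r(x)$ — which, combined with (I), gives \eqref{eq:precisea}.

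Step (I) is the core. Fix $y\ne z$ in $S\cap B_r(x)$ and pick $k\ge i$ with $2^{-k}\le|y-z|<2^{-k+1}$. Apply Lemma \ref{lem:pitagora} with $\Gamma_1=\Gamma_y^{\,2^{-k+2}}$ (which contains $y$) and $\Gamma_2=\Gamma$: since $z\in S\cap B_{2^{-k+2}}(y)$ one has $\sfd(z,\Gamma_y^{\,2^{-k+2}})\le \beta(y,2^{-k+2})2^{-k+2}\le 4\beta_{k-2}|y-z|$, so
\[
|y-z|^2-|\Pi(y)-\Pi(z)|^2\ \le\ |y-z|^2\Big(\sfd(\Gamma_y^{\,2^{-k+2}},\Gamma)+4\beta_{k-2}\Big)^2 .
\]
To bound $\sfd(\Gamma_y^{\,2^{-k+2}},\Gamma_x^{\,2^{-i}})$ I would chain the tilting estimate (Proposition \ref{prop:tilt}): run through the dyadic scales from $2^{-k+2}$ up to $2^{-i+2}$ keeping the center $y$, then exchange $y\leftrightarrow x$ at scale $2^{-i+2}$ (legitimate as $|x-y|<r<\tfrac14 2^{-i+2}$), then chain $2^{-i+2}\to 2^{-i}$. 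Every scale used is $2^{-\ell}$ with $\ell\in\{i-2,\dots,k\}$, a range on which the hypothesis makes $\alpha_\ell$ — hence $\beta_\ell$ — controllable, and the total is $\le C(\beta_{i-2}+\dots+\beta_{k})$. Dividing the displayed inequality by $|y-z|+|\Pi(y)-\Pi(z)|\ge 2^{-k}$ and using $\beta_{k-2}\le\beta_{i-2}+\dots+\beta_k$ gives
\[
\big|\,|y-z|-|\Pi(y)-\Pi(z)|\,\big|\ \le\ C\,2^{-k}\big(\beta_{i-2}+\dots+\beta_{k}\big)^2\ =\ C\,2^{-i}\,\frac{\big(\beta_{i-2}+\dots+\beta_{i+(k-i)}\big)^2}{2^{\,k-i}}\ \le\ C\,2^{-i}\sup_{j\ge0}\frac{\big(\beta_{i-2}+\dots+\beta_{i+j}\big)^2}{2^{\,j}} .
\]
Hence $f$ is a $\delta$-isometry with $\delta$ equal to the right-hand side, which is precisely \eqref{eq:preciseb}.

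For (II) I would run a Reifenberg-type iteration. Given $p\in\Gamma\cap B_r(x)$, Proposition \ref{prop:realizing planes} yields $y_0\in S\cap B_r(x)$ with $|y_0-p|\le C\alpha_i r$. Setting $e_m:=|\Pi(y_m)-p|$ and zooming about $y_m$ to a dyadic scale $\rho_m\approx e_m$, the Hausdorff bound $\sfd_H(S\cap B_{\rho_m}(y_m),\Gamma_{y_m}^{\rho_m}\cap B_{\rho_m}(y_m))\le C\eps\,\rho_m$, together with the fact that the projection onto $\Gamma$ of the disk $\Gamma_{y_m}^{\rho_m}\cap B_{\rho_m}(y_m)$ still covers a disk of $\Gamma$ of radius $>e_m$ about $\Pi(y_m)$, produces $y_{m+1}\in S$ with $e_{m+1}\le C\eps\,e_m\le\tfrac12 e_m$. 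Then $y_m\to y_\infty\in S$ with $\Pi(y_\infty)=p$ and $|y_\infty-y_0|\le Ce_0\le C\alpha_i r$; since $x\in\Gamma$ and $\Pi(y_\infty)=p$ we get $|y_\infty-x|^2=|p-x|^2+\sfd(y_\infty,\Gamma)^2$ with $\sfd(y_\infty,\Gamma)\le\sfd(y_0,\Gamma)+|y_\infty-y_0|\le(\beta_i+C\alpha_i)r$, so $y_\infty\in S\cap B_r(x)$ whenever $|p-x|\le r\sqrt{1-C\alpha_i^2}$. Thus $\Pi(S\cap B_r(x))\supseteq\Gamma\cap B_{r(1-C\alpha_i^2)}(x)$, i.e.\ the surjectivity defect of $\Pi$ is $\le C\alpha_i^2 r$. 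Taking the correspondence $\{(y,\iota\Pi(y)):y\in S\cap B_r(x)\}$, extended over the remaining annulus by matching each boundary point to $y_\infty$ for a nearby interior target, and combining with (I), yields \eqref{eq:precisea}.

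The step I expect to be the genuine obstacle is the iteration in (II): one must keep all iterates $y_m$ inside $B_1(0)$ (so the hypothesis $\alpha_\ell\le\eps$ keeps applying at the scale $\rho_m$), guarantee via Lemma \ref{nonorth} and the tilting estimate that the comparison planes $\Gamma_{y_m}^{\rho_m}$ stay transverse enough to $\Gamma$ that the projected disks really do reach $p$, and — above all — squeeze out the \emph{quadratic} bound $C\alpha_i^2 r$ (in place of the linear $C\alpha_i r$ that the Hausdorff bound alone provides) for the boundary annulus; this quadratic gain is exactly the David--Toro phenomenon the theorem isolates, and it is the source of the $\alpha_i^2$ summand in \eqref{eq:precisea}. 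By comparison, the index bookkeeping in (I) — checking that the chained tilting estimates only use scales $2^{-\ell}$ with $\ell\in[i-2,k]$, and handling the borderline case $|y-z|\approx 2^{-i+1}$ — is routine.
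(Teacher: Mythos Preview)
Your Step (I) is essentially the paper's own proof of \eqref{eq:preciseb} (after the harmless reduction to $x=0$, $i=0$): project onto the realizing plane $\Gamma$, chain the tilting estimates of Proposition \ref{prop:tilt} to bound $\sfd(\Gamma^{2^{-k+2}}_y,\Gamma)$, apply Lemma \ref{lem:pitagora}, and divide. This part is correct.

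Step (II) is where your proposal diverges from the paper, and the transversality obstacle you list is more serious than you suggest. You iterate to a limit $y_\infty$ with $\Pi(y_\infty)=p$, but each step requires $\Gamma_{y_m}^{\rho_m}$ transverse enough to $\Gamma$ for the projected disk to reach $p$; that is governed by the tilting sum $\beta_{i-2}+\dots+\beta_{\ell_m}$ with $\rho_m=2^{-\ell_m}$. Under the sole hypothesis $\alpha_j\le\eps$ this sum grows like $\eps(\ell_m-i)$ and can exceed $1$ as $\rho_m\to0$, so the iteration may simply stall --- nothing in your framework prevents this without an extra decay assumption on the $\beta_j$'s that the theorem does not make. (That $S$ is not assumed closed, hence $y_\infty$ need not belong to $S$, is a further but comparatively minor gap.)

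The paper's way out is to drop exact surjectivity and prove only $\theta'$-density of $\Pi$, with $\theta'=\theta\vee C\alpha_0^2$ and $\theta$ the right-hand side of \eqref{eq:preciseb}. A supremum argument reduces this to a \emph{single} shrinking step at scale $\approx R$, where $R\ge\theta'$ is the current hole radius; the key point is that the very definition of $\theta$ forces the tilting sum at that one scale to be $<1$, so Lemma \ref{nonorth} applies and the step succeeds. The boundary issue is handled in that same step by aiming not at $p$ but at $q=p-\tfrac{R}{2}\tfrac{p}{|p|}$, displaced radially inward; a Pythagoras computation very much in the spirit of yours then confirms the new point lies in $B_1(0)$, and the condition $R\ge C\alpha_0^2$ is exactly what makes that computation close --- this is where the quadratic term in \eqref{eq:precisea} originates.
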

Inequalities \eqref{eq:precisea} and \eqref{eq:preciseb} should be thought as weak versions of the formal inequalities ``$\a(x,r)\le C\alpha(x,r)^2$" and ``$\b(x,r)\le C\beta(x,r)^2$" that are not true in general since, as we saw in the introduction, \eqref{eq:trivial bound} and \eqref{eq:trivial bound 2} cannot be improved.

\begin{proof}[Proof of Thoerem \ref{thm:mainA} and Thoerem \ref{thm:mainB}, given Theorem \ref{thm:precise}]
Let $\eps\in(0,1/2)$ and $\bar i \in \mathbb{N}$ be as in the hypotheses of Thoerem \ref{thm:mainA} and Thoerem \ref{thm:mainB}. Since  $\eps>2^{-\bar i}$, from Theorem \ref{thm:precise} and the definition of the numbers $\a_i$ we have
\begin{equation}\label{eq:step1}
	\begin{split}
			(\a_i)^\lambda\le \sup_{\substack{  {x \in S,}\\ {|x|\le1-2^{-i}}}}\a(x,2^{-i})^\lambda&\le C\left(\sup_{j \in \mathbb{N}_0} \frac{\big(\beta_{i-2}+...+\beta_{i+j}\big)^{2\lambda}}{2^{\lambda j}}\right)\vee C\alpha_i^{2\lambda}\\
						&\le C\alpha_i^{2\lambda}+C\sum_{j \ge 0} \frac{(j+3)^{2\lambda-1}\vee 1}{2^{\lambda j}} (\beta_{i-2}^{2\lambda}+\dots+ \beta_{i+j}^{2\lambda}), \quad \forall i \ge \bar i.
\end{split}
\end{equation}
An analogous estimate holds for $\b_i$, $\forall i \ge \bar i$. Recalling that $\beta_i\le \alpha_i$ we obtain
\begin{align*}
	\sum_{i\ge \bar i} \a_i^\lambda &\le C\sum_{i\ge \bar i} \alpha_i^{2\lambda}+C\sum_{i\ge \bar i} \sum_{j \ge 0} \frac{(j+3)^{2\lambda-1}\vee 1}{2^{\lambda j}} (\alpha_{i-2}^{2\lambda}+\dots+ \alpha_{i+j}^{2\lambda})\\
	&\le C\sum_{i\ge \bar i} \alpha_i^{2\lambda} + C\sum_{j \ge 0} \frac{(j+3)^{2\lambda-1}\vee 1}{2^{\lambda j}} \sum_{i\ge \bar i} (\alpha_{i-2}^{2\lambda}+\dots+ \alpha_{i+j}^{2\lambda})\\
	&\le  C\sum_{i\ge \bar i} \alpha_i^{2\lambda} +C\left(\sum_{j \ge 0} \frac{(j+3)^{2\lambda}\vee (j+3)}{2^{\lambda j}} \right)\left( \sum_{i\ge \bar i-2} \alpha_i^{2\lambda} \right),
\end{align*}
which proves \eqref{eq:mainA precise}. The exact same computations yields also \eqref{eq:mainB precise}.
\end{proof}

Before passing to the proof of Theorem \ref{thm:precise} we recall the definition of \emph{$\delta$-isometry} and how it can be used to estimate the Gromov-Hausdorff distance. Given two metric spaces $(\X_i,\sfd_i)$, $i=1,2$ and a number $\delta>0$ we say that a map $f: \X_1 \to \X_2$ is a $\delta$-isometry if $|\sfd_2(f(x),f(y))-\sfd_1(x,y)|<\delta$ for every $x,y \in X_1.$ It holds that
\[
\sfd_{GH}((\X_1,\sfd_1),(\X_2,\sfd_2))\le 2\inf \{\delta>0 \ : \ \exists\,  \delta\text{-isometry } f : \X_1 \to \X_2, \text{ with $f(X_1)$ $\delta$-dense in $X_2$}\},
\]
see for example \cite{burago} for a proof.

\begin{proof}[Proof of Theorem \ref{thm:precise}]
Observe that  it is sufficient to consider the case $x=0$ and $i=0$ for both \eqref{eq:precisea} and \eqref{eq:preciseb}, since the conclusion then follows by translating and scaling.

	We define 
	\begin{equation}\label{alpha}
		\theta\coloneqq  C^2\sup_{j \in \mathbb{N}_0} \frac{\big(\beta_{-2}+...+\beta_{j}\big)^2}{2^j},
	\end{equation}
\begin{equation}\label{alpha}
	\theta'\coloneqq  \max(\theta,C\alpha_0^2),
\end{equation}
where $C$ is a big enough constant depending only on $n$, to be determined later.
	Before proceeding we make the following observation 
	\begin{equation}\label{goodscaling}
		C^2(\beta_{-2}+...+\beta_{j})^2> \lambda>0 \, \implies \theta> \frac{2\lambda}{  2^j}.
	\end{equation}
Along the proof, for a given $x\in S$ and $r>0$ we will denote by $\Gamma_x^r$ one of the realizing planes given by Proposition \ref{prop:realizing planes} (the choice of the particular plane is not relevant).

	{\bf Proof of \eqref{eq:preciseb}:} Let $\Pi$ be the orthogonal projection onto $\Gamma_0^1$.  It is sufficient to show that
	\begin{equation}\label{eq:tetaisom}
		\Pi : S\cap B_{1}(0) \to \Gamma_{0}^1\cap B_{1}(0), \quad \text{is a $\theta$-isometry},
	\end{equation}
with respect to the Euclidean distance. 

	Choose $x,y \in S\cap B_{1}(0)$ distinct and observe that there exists a unique integer $j\ge 0$ such that 
	\begin{equation}\label{scalenj}
		\frac{1}{2^{j}}\le|x-y|< \frac{1}{2^{j-1}}.
	\end{equation}
Applying Proposition \ref{prop:tilt}  multiple times (assuming $\alpha_j\le \alpha(n)$ for every $j\ge i-2$, with $\alpha(n)$ as in the statement of Prop. \ref{prop:tilt}) we have
	\begin{align}
		\sfd(\Gamma_0^1,\Gamma^{2^{-j+1}}_x)&\le\sfd(\Gamma^{1}_{0},\Gamma^{2}_{0})+\sfd(\Gamma^{2}_{0},\Gamma^{2^{2}}_{0})+\sfd(\Gamma^{2^{2}}_{0},\Gamma^{2^{2}}_x) \nonumber+\sfd(\Gamma^{2^{2}}_x,\Gamma^{2^{1}}_x)+...+\sfd(\Gamma^{2^{-j+2}}_x,\Gamma^{2^{-j+1}}_x)\\\nonumber
		&\le D(\beta(0,0)+\beta(0,2)+\beta(0,2^{2})+\beta(x,2^{2})+...+\beta(x,2^{-j+1}))\\
		&\le D(\beta_{-2}+...+\beta_{j-1}),\label{planebound}
	\end{align}
for some constant  $D$ depending only on $n$.
	We consider now two cases, when $(D+4)(\beta_{-2}+...+\beta_{j-1})>1$ or the opposite. In the first case, assuming that $C\ge D+4$, from \eqref{goodscaling} we have $\theta\ge \frac{2}{2^j}$ and therefore
	\[||\Pi(x)-\Pi(y)|-|x-y||\le |x-y| \le \frac{2}{2^{j}}< \theta,\]
	that is what we wanted. Hence we can suppose that $(D+4) (\beta_{-2}+...+\beta_{j-1})\le 1.$
	Since from \eqref{scalenj} it holds that $|x-y|\ge 2^{-j}$,  we have that 
	\begin{equation}\label{eq:boh}
		\sfd(y,\Gamma^{2^{-j+1}}_x)\le 4\beta(x,2^{-j+1})|x-y|.
	\end{equation}
	 We can now apply Lemma \ref{lem:pitagora} to the planes $\Gamma_0^1,\Gamma^{2^{-j+1}}_x$, that coupled with  \eqref{planebound} and \eqref{eq:boh} gives
	\begin{align*}
		|\Pi(x)-\Pi(y)|&\ge |x-y|\sqrt{1-(D(\beta_{-2}+...+\beta_{j-1})+4\beta_{j-1})^2}.
	\end{align*}
	Hence
	\begin{align*}
		|x-y|-|\Pi(x)-\Pi(y)|&\le  |x-y|\left(1- \sqrt{1-\left((D+4)(\beta_{-2}+...+\beta_{j-1})	\right)^2}\right).
	\end{align*}
	Thanks to the assumption  $(D+4)(\beta_{-2}+...+\beta_{j-1})\le 1$, we can  use  the elementary the inequality $1-\sqrt{1-x}\le x$, valid for $0\le x\le 1$, to finally obtain
	\begin{align*}
			||x-y|-|\Pi(x)-\Pi(y)||&\le |x-y| ((D+4)(\beta_{-2}+...+\beta_{j-1}))^2\\
		&\overset{\eqref{scalenj} }{\le} \frac{((D+4)(\beta_{-2}+...+\beta_{j-1}))^2}{2^{j-1}}\le \theta,
	\end{align*}
	where we have used the definition of $\theta$ and assuming  $C\ge 2(D+4)$. This concludes the proof of  \eqref{eq:tetaisom} and thus the proof of \eqref{eq:preciseb}.\\
	
{\bf Proof of \eqref{eq:precisea}:} In view of \eqref{eq:tetaisom}, we only need to show that $\Pi$ is also $\theta'$-surjective.

	\emph{Claim:} Let $C'=C'(n)\ge 1$ be the constant given by Proposition \ref{prop:realizing planes}. For every $p \in \Gamma_0^1\cap B_{1}(0)$ and $x \in S\cap B_{1}(0)$ such that $C'\alpha_{0}\ge |p-\Pi(x)|\ge \theta' $  it holds
	\[ B_{\frac{3}{4}|p-\Pi(x)|}(p)\cap \Pi(S\cap B_{1}(0)) \neq \emptyset.\] 
	
	Before proving the claim, we show that it implies that $\Pi$ is $\theta'$-surjective. Indeed suppose it is not, i.e. there exists $p \in \Gamma_0^1 \cap B_{1}(0)$ such that 
	$$ R\coloneqq \sup\{r \, |\, B_{r}(p)\cap \Pi( S\cap B_{1}(0))= \emptyset\}\ge \theta'.$$
	Since $d_H(\Gamma_0^1\cap B_{1}(0),S\cap B_{1}(0))\le C'\alpha_{0}$  (recall that $\Gamma_0^1$ was chosen as a realizing plane as given by Prop. \ref{prop:realizing planes}), there exists $x \in S\cap B_{1}(0)$ such that $|x-p|\le C'\alpha_{0}$ and in particular $|\Pi(x)-p|\le C'\alpha_{0}$. Therefore $R \le C'\alpha_0$. This implies, from the definition of $R$, that there exists a point $x'\in S\cap B_{1}(0)$ such that $\theta'\le R\le |\Pi(x')-p|\le \min(\frac{5}{4}R,C'\alpha_0)$.  
	However the Claim gives that $$\emptyset \neq B_{\frac{3}{4}|\Pi(x')-p|}(p) \cap \Pi(S\cap B_{1}(0))\subset B_{\frac{15}{16}R}(p) \cap \Pi(S\cap B_{1}(0)),$$ that contradicts the minimality of $R$.

	\emph{Proof of the Claim}: Set $R\coloneqq |p-\Pi(x)|.$ To make the proof more easy to follow we first explain the intuition behind it. The key idea is that near $x$ the set $S$ is distributed in a horizontal manner, near a plane passing through $x$. We can then move along this plane towards $p$ and thus find a point $y$ in $S\cap B_{1}(0)$ such that $|\Pi(y)-p|\sim \frac{R}{2}$. However, since $p$ can be near the boundary of $B_{1}(0)$, in this movement we might go outside the ball $B_1(0)$. To avoid this issue we consider a point $q$ such that $|p-q|\sim \frac{R}{2}$ but placed radially towards the origin and then find a point $y$ (using the idea  described above of moving horizontally near $x$) that projects near $q$.  \\
	Start by noticing that (if $\alpha_0$ is small enough w.r.t.\ $n$) $R\le C'\alpha_0 < 1/4.$ Therefore there exists a  unique integer $j\ge 2$ such that 
	\begin{equation}\label{scalnj2}
		\frac{1}{2^{j+1}}\le R < \frac{1}{2^{j}}.
	\end{equation}
	Since by assumption $\theta\le \theta'\le R \le 1/2^{j}$, from \eqref{goodscaling} we have $(C(\beta_{-2}+...+\beta_{j}))^2\le 1/2$. Define now the point $q \in \Gamma_0^1 \cap B_{1}(0)$ as
	\[q=p-\frac{p}{|p|}\frac{R}{2}.\]
	Then
	\begin{equation}\label{qnorm}
		|q|=\left ||p|-\frac{R}{2}\right |\le 1-\frac{R}{2},
	\end{equation}
	indeed $|p|< 1$ and $R<1.$ Moreover $|p-q|=R/2$ and $|q-\Pi(x)|\le |p-\Pi(x)|+|p-q|=3/2R.$
	Consider  the plane $\Gamma_{x}^{2^{-j}}$, arguing as in \eqref{planebound} we can show that
	\[\sfd(\Gamma_0^1,\Gamma_{x}^{2^{-j}})\le C(\beta_{-2}+...+\beta_{j})<1, \]
	provided $C$ is big enough.
	Then by Proposition \ref{nonorth} there exists a point $e \in \Gamma_{x}^{2^{-j}}$ such that $\Pi(e)=q$. Applying Lemma \ref{lem:pitagora}  we obtain
	\begin{align*}
		|e-x|^2\le|q-\Pi(x)|^2+|e-x|^2/2
	\end{align*}
	that implies $|e-x|\le \sqrt 2 |q-\Pi(x)|\le 3R \le 1/{2^{j-2}}  $. Therefore  there exists $y \in S\cap B_{2^{-j+2}}(x)$ such that $|y-e|\le \alpha_{j-2}2^{-j+2}< R/4 $ (provided $\alpha_{j-2}$ is small enough). Thus
	\[|\Pi(y)-p|\le |\Pi(y)-\Pi(e)|+|p-q|\le|y-e|+R/2< 3/4R, \]
	that means $\Pi(y) \in B_{\frac{3}{4}R}(p). $ It remains to prove that $y \in B_{1}(0).$ First we observe that from \eqref{scalnj2} and the assumption $R\le C'\alpha_0 $ we have
	\[|y-x|\le \frac{4}{2^{j}}\le 8R\le 8C' \alpha_0.\]
	Hence, since $x \in B_1(0),$
	\[\sfd(y,\Gamma_0^1)\le |x-y|+\sfd(x,\Gamma_0^1)\le 9C'\alpha_0. \]
	From previous computations we know that that $|\Pi(y)-q|=|\Pi(y)-\Pi(e)|\le |y-e|\le R/4$, therefore from \eqref{qnorm} $|\Pi(y)|\le |q|+R/4\le 1-R/4.$ From Pythagoras Theorem we obtain
	\begin{align*}
		|y|^2= |\Pi(y)|^2+\sfd(y,\Gamma_0^1)^2&\le  \left (1-\frac{R}{4}\right )^2+(9C')^2 \alpha_0^2=\\
		&=1+R\left(\frac{R}{16}+\frac{(9C')^2\alpha_0^2}{R}-\frac{1}{2} \right).
	\end{align*}
	Thus to conclude it is enough to show that 
	\[\frac{R}{16}+\frac{(9C')^2\alpha_0^2}{R}<\frac{1}{2}.\]
	Since by assumption $R\ge \theta'$ and by definition $\theta' \ge C\alpha_0^2$, we deduce that $\frac{\alpha_0^2}{R}\le \frac{1}{ C}$. Therefore recalling that $R<1$, the above inequality is satisfied as soon as $C<4(9C')^2$. This concludes the proof.
\end{proof}
	
\section{Converse inequalities : $\alpha^2 \le \a$, $\beta^2\le \b$}
As explained in the introduction and in Section \ref{sec:proofmain}, the inequalities ``$\a(x,r)\le C\alpha(x,r)^2$" and ``$\b(x,r)\le C\beta(x,r)^2$" are not true in general, but hold only in their weaker formulations contained in Theorem \ref{thm:precise}. In this final section we will prove that the opposite inequality $\alpha(x,r)^2\le C\a(x,r)$ do hold in general, together with a weaker version of $\beta(x,r)^2\le C \b(x,r)$.

\begin{prop}
	Let $S\subset \rr^d$ and $n \in \mathbb{N}$ with $n<d$. Then 
		\[
	\alpha(x,r)^2\le C\a(x,r), \quad \text{for every }x\in S, r>0,
	\]
	where $\alpha(x,r)$ is as in \eqref{eq:alfadef} and $C=C(n)>0.$
\end{prop}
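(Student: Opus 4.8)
The idea is to turn the abstract Gromov--Hausdorff closeness of $S\cap B_r(x)$ to a Euclidean ball into an honest approximating affine plane in $\rr^d$, the square factor appearing exactly when one passes from an estimate on mutual distances to an estimate on the height over that plane (this is the quantitative form of the thin-triangle example of the introduction). By translation and scaling invariance we may take $x=0$ and $r=1$ and set $\a:=\a(0,1)$. Since any $n$-plane through $0$ has both one-sided Hausdorff distances to $S\cap B_1(0)$ bounded by $1$ (using $0\in S$ and $0\in\Gamma$), one has $\alpha(0,1)\le 1$ always; hence if $\a\ge\eps_0(n)$ the inequality holds trivially, and we may assume $\a<\eps_0(n)$, with $\eps_0(n)$ a small constant fixed along the way.

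By the standard relation between $\sfd_{GH}$ and $\delta$-isometries (see \cite{burago}) there is a $\delta$-isometry $f:S\cap B_1(0)\to B^{\rr^n}_1(0)$ with $\delta$-dense image, where $\delta:=C_0\a$ for a universal $C_0$. I would first record two elementary facts. Since $0\in S$, the image of $f$ lies in $B^{\rr^n}_{1+\delta}(f(0))$, and being $\delta$-dense in $B^{\rr^n}_1(0)$ it forces $B^{\rr^n}_1(0)\subset B^{\rr^n}_{1+2\delta}(f(0))$, hence $|f(0)|\le 2\delta$. Moreover, choosing for every $p\in B^{\rr^n}_1(0)$ a point $g(p)\in S\cap B_1(0)$ with $|f(g(p))-p|\le\delta$ (possible by $\delta$-density) defines a map $g:B^{\rr^n}_1(0)\to\rr^d$ which is a $3\delta$-isometry, and taking $g(0)$ to be a near-preimage of $0\in\rr^n$ one gets $|g(0)|\le 4\delta$, using that $f$ is a $\delta$-isometry and $0\in S$. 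Now I would apply Lemma \ref{lem:isometrylemma} to $g$ (with $\eps=3\delta$, $r=1$): it yields an affine isometry $I:\rr^n\to\rr^d$ with $I(0)=g(0)$ and $|I-g|\le C\sqrt{\delta}$ on $B^{\rr^n}_1(0)$. Let $\Gamma_0:=I(\rr^n)$; since $I(0)=g(0)\in\Gamma_0$ and $|g(0)|\le 4\delta$ we get $\sfd(0,\Gamma_0)\le 4\delta$, so translating $\Gamma_0$ by $-\Pi_{\Gamma_0}(0)$ (a vector of length $\le 4\delta$, $\Pi_{\Gamma_0}$ being the orthogonal projection onto $\Gamma_0$) produces an $n$-plane $\Gamma\ni 0$ with $\sfd_H(\Gamma_0\cap B_1(0),\Gamma\cap B_1(0))\le C\delta$ (using that $\Gamma$ passes through the origin, so rescaling towards $0$ stays in $\Gamma$).

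It then remains to bound $\sfd_H(S\cap B_1(0),\Gamma\cap B_1(0))$ by $C\sqrt{\delta}$, and since $\Gamma$ and $\Gamma_0$ differ by $O(\delta)$ it is enough to argue with $\Gamma_0$. For the inclusion $S\cap B_1(0)\subset(\Gamma_0)_{C\sqrt{\delta}}$: given $y\in S\cap B_1(0)$, the point $f(y)$ lies in $B^{\rr^n}_1(0)$, its chosen preimage $g(f(y))$ satisfies $|y-g(f(y))|\le 2\delta$ because $f$ is a $\delta$-isometry, and $|g(f(y))-I(f(y))|\le C\sqrt{\delta}$ with $I(f(y))\in\Gamma_0$. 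For the reverse inclusion $\Gamma_0\cap B_1(0)\subset(S\cap B_1(0))_{C\sqrt{\delta}}$: given $z\in\Gamma_0\cap B_1(0)$, write $z=I(w)$; since $I$ is an affine isometry with $I(0)=g(0)$ and $|g(0)|\le 4\delta$ one has $|w|=|z-g(0)|\le 1+4\delta$, so $w':=(1+5\delta)^{-1}w\in B^{\rr^n}_1(0)$ with $|I(w')-z|\le C\delta$, and then $g(w')\in S\cap B_1(0)$ with $|g(w')-z|\le|g(w')-I(w')|+|I(w')-z|\le C\sqrt{\delta}$. Since $0\in\Gamma$, the orthogonal projection onto $\Gamma$ is $1$-Lipschitz and fixes $0$, so in the first inclusion the approximating point $\Pi_\Gamma(y)$ lies in $\Gamma\cap B_1(0)$. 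Combining, $\alpha(0,1)\le\sfd_H(S\cap B_1(0),\Gamma\cap B_1(0))\le C\sqrt{\delta}=C\sqrt{C_0\a}$, that is $\alpha(0,1)^2\le C(n)\,\a(0,1)$, after enlarging $C(n)$ to cover the trivial regime $\a\ge\eps_0(n)$.

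The only genuinely non-routine ingredient is Lemma \ref{lem:isometrylemma}: a $\delta$-isometry of $B^{\rr^n}_1(0)$ into $\rr^d$ is approximated by a true affine isometry only up to an error of order $\sqrt{\delta}$, not $\delta$, and this is precisely where the square enters --- mutual distances are pinned to order $\delta$, but the displacement off the $n$-plane they span is of order $\sqrt{\delta}$. The rest is bookkeeping, the two mildly delicate points being: making the parametrisation $I$ ``centred'' near the origin, handled by choosing $g(0)$ to be a near-preimage of $0\in\rr^n$ (which is automatically $O(\delta)$-close to $0\in S$ since $0\in S$ and $f$ is a $\delta$-isometry); and the fact that a point of $\Gamma_0$ near $\partial B_1(0)$ may pull back under $I$ to a point slightly outside $B^{\rr^n}_1(0)$, handled by the harmless rescaling $w\mapsto(1+5\delta)^{-1}w$.
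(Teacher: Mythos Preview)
Your proof is correct and follows exactly the route the paper intends: the paper's own proof is the single line ``Straightforward from Lemma~\ref{lem:isometrylemma}'', and you have carefully spelled out that straightforwardness---building a near-inverse $g$ of the $\delta$-isometry $f$, applying Lemma~\ref{lem:isometrylemma} to $g$ to obtain the affine $n$-plane $\Gamma_0$, and then verifying both Hausdorff inclusions with the requisite boundary bookkeeping. There is no substantive difference in approach.
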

\begin{proof}
	Straightforward from Lemma \ref{lem:isometrylemma}
\end{proof}

Given a set of $n+1$ points $x_0,...,x_n\in \rr^d$ we denote by ${\sf Vol}_{n}(x_0,...,x_{n})$  the volume of the $n$-dimensional simplex with vertices $x_0,...,x_n.$ It is well know  that for every $n \in \mathbb{N}$ there exists a polynomial $P_n: \rr^{(n+1)n/2}\to \rr$ such that
\begin{equation}\label{eq:menger}
	{\sf Vol}_{n}(x_0,...,x_{n})^2=P_n(\{|x_i-x_j|^2\}_{0\le i<j\le n}),
\end{equation}
see for example \cite[$\S$ 40]{blu} for a proof.

The following results states that $\beta(x,r)\le C\sqrt{\b(x,r)}$, provided $B_r(x)\cap S$ contains $n$ points which are `sufficiently independent' in the sense that they span a simplex with large volume.
\begin{prop}
	Let $S\subset \rr^d$ and $n \in \mathbb{N}$ with $n<d$. Then
	\[
\beta(x,r)\le C\left(\frac{\sqrt{\b(x,r)}}{V_n}\wedge V_n^{\frac{1}{n}}\right),	 \quad \text{for every }x\in S, r>0,
\]
where $V_n=\sup_{\{x_i\}_{i=0}^n\subset S \cap B_r(x)} r^{-n}{\sf Vol}_{n}(x_0,...,x_n)$, 	 $\beta(x,r)$ is as in \eqref{eq:betadef} and $C=C(n)>0.$  In particular, if $\alpha(x,r)<1/8$ ($\alpha(x,r)$ is as in \eqref{eq:alfadef}), then
\[
\beta(x,r)\le 100C\sqrt{\b(x,r)},	 \quad \text{for every }x\in S, r>0,
\]
\end{prop}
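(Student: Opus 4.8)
The plan is to prove the two displayed inequalities separately, the second being an easy consequence of the first together with Proposition \ref{prop:realizing planes}.

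\textbf{Setup and the main inequality.} By scaling and translation invariance we may assume $x=0$ and $r=1$, so that $\b(0,1)$ is the infimal $\delta$ for which some $\delta$-isometry $f:(S\cap B_1(0),\sfd_{Eucl})\to (B_1^{\rr^n}(0),\sfd_{Eucl})$ exists. Fix such an $f$ with $\delta$ close to $\b(0,1)$, and choose points $x_0,\dots,x_n\in S\cap B_1(0)$ whose simplex has volume close to $V_n$ (here $V_n$ already has the $r^{-n}$ normalisation built in, which with $r=1$ is irrelevant). The idea is that the $\binom{n+1}{2}$ mutual distances $|x_i-x_j|$ are distorted by at most $\delta$ under $f$, so by the Cayley--Menger formula \eqref{eq:menger} the volume ${\sf Vol}_n(f(x_0),\dots,f(x_n))^2 = P_n(\{|f(x_i)-f(x_j)|^2\})$ differs from ${\sf Vol}_n(x_0,\dots,x_n)^2$ by a term controlled by $\delta$ (times a constant depending on $n$, since all points lie in a bounded ball and $P_n$ is a polynomial, hence Lipschitz on bounded sets). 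But $f(x_0),\dots,f(x_n)$ are $n+1$ points in $\rr^n$, so their simplex is degenerate: ${\sf Vol}_n(f(x_0),\dots,f(x_n))=0$. Hence ${\sf Vol}_n(x_0,\dots,x_n)^2\le C(n)\,\delta$, and letting $\delta\to\b(0,1)$ gives $V_n^2\le C(n)\,\b(0,1)$, i.e. $V_n\le C\sqrt{\b(0,1)}$. This is one of the two bounds in the parenthesis $\big(\tfrac{\sqrt{\b}}{V_n}\wedge V_n^{1/n}\big)$ — more precisely it will contribute the bound $\beta\le C V_n^{1/n}$ once we relate $\beta$ to $V_n$.

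\textbf{Bounding $\beta$ by the simplex volume.} Separately, we need the purely Euclidean fact that if every $(n+1)$-tuple of points of $S\cap B_1(0)$ spans a simplex of volume at most $V_n$, then $S\cap B_1(0)$ is contained in a $C V_n^{1/n}$-neighbourhood of some affine $n$-plane through (a point near) the origin; this is essentially the statement that a set which is ``$n$-dimensionally thin'' in the volume sense is close to an $n$-plane. One way: take $x_0,\dots,x_n$ realising (up to a factor $2$, say) the supremal volume $V_n$; they span an $n$-plane $\Gamma_0$, and for any further point $y\in S\cap B_1(0)$ the volume of the simplex on $x_0,\dots,x_{n-1},y$ (or the best such $n$-subset) is at least $\tfrac1n\,{\sf dist}(y,\Gamma_0)\cdot(\text{base }(n-1)\text{-volume})$, while the base $(n-1)$-volume is at least $c(n)V_n^{(n-1)/n}$ by near-maximality of the $x_i$'s (otherwise one could enlarge the volume). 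Rearranging gives ${\sf dist}(y,\Gamma_0)\le C(n)V_n/V_n^{(n-1)/n}=C(n)V_n^{1/n}$, and after translating $\Gamma_0$ to pass through the origin (it passes within $C V_n^{1/n}$ of $0$ since $0=x_0\in S$ — wait, $0$ need not be one of the $x_i$; but $0\in S\cap B_1(0)$ so the same estimate applies to $y=0$) we obtain $\beta(0,1)\le C(n)V_n^{1/n}$. Combining with the previous paragraph, $\beta(0,1)\le C\big(V_n^{1/n}\wedge \tfrac{\sqrt{\b(0,1)}}{V_n}\big)$ once we also check the other branch $\beta\le C\sqrt{\b}/V_n$; the latter holds trivially when $V_n$ is bounded below but the displayed combined bound is really $\beta \le C(V_n^{1/n}) \wedge (C\sqrt{\b}/V_n)$, and a short case split on whether $V_n$ is large or small relative to $\b^{1/(2(n+1))}$ (balancing the two branches) yields both sides of the minimum. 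The main obstacle here is the elementary-geometry lemma that thin sets lie near a plane with the quantitative exponent $1/n$; this is standard but requires a careful near-maximality argument to get the $(n-1)$-volume of the base from below.

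\textbf{The ``in particular'' clause.} Finally, assume $\alpha(0,1)<1/8$. By Proposition \ref{prop:realizing planes} there is a realizing plane $\Gamma_0^1$ with $\sfd_H(S\cap B_1(0),\Gamma_0^1\cap B_1(0))\le C'\alpha(0,1)$; since $\Gamma_0^1$ is a full $n$-plane through the origin, $S\cap B_1(0)$ is $C'\alpha(0,1)$-dense near $\Gamma_0^1\cap B_1(0)$, and in particular one can pick points $x_0,\dots,x_n\in S\cap B_1(0)$ within $C'\alpha(0,1)<$ (a small constant) of, say, $0,\tfrac12 e_1,\dots,\tfrac12 e_n$ for an orthonormal frame $e_1,\dots,e_n$ of $\Gamma_0^1$; such a configuration spans a simplex of volume bounded below by a dimensional constant $c(n)>0$. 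Hence $V_n\ge c(n)$, and the bound $\beta(0,1)\le C\sqrt{\b(0,1)}/V_n\le (C/c(n))\sqrt{\b(0,1)}$ follows, with $C/c(n)\le 100C$ after renaming constants. This last step is routine and uses only the already-proved parts plus Proposition \ref{prop:realizing planes}.
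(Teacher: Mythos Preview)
Your first paragraph contains a genuine error that breaks the argument. You claim that since $f(x_0),\dots,f(x_n)$ are $n+1$ points in $\rr^n$, their $n$-simplex is degenerate and ${\sf Vol}_n(f(x_0),\dots,f(x_n))=0$. This is false: $n+1$ points in $\rr^n$ generically span a full-dimensional simplex with positive $n$-volume (e.g.\ three points in $\rr^2$ form a triangle). Consequently your conclusion $V_n\le C\sqrt{\b(0,1)}$ is wrong in general---take $S=B_1^{\rr^n}(0)\subset \rr^d$, for which $\b(0,1)=0$ while $V_n$ is a positive dimensional constant. This also explains why your second paragraph becomes muddled when you try to extract the branch $\beta\le C\sqrt{\b}/V_n$: you never actually proved it.

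The fix, which is the paper's approach, is to take one more point. Keep $x_0,\dots,x_n$ near-maximizing $V_n$, let $\Gamma$ be their affine span, and let $x_{n+1}\in S\cap B_1(0)$ be \emph{arbitrary}. Now $f(x_0),\dots,f(x_{n+1})$ are $n+2$ points in $\rr^n$, so indeed ${\sf Vol}_{n+1}(f(x_0),\dots,f(x_{n+1}))=0$, and the Cayley--Menger/Lipschitz argument you wrote gives ${\sf Vol}_{n+1}(x_0,\dots,x_{n+1})^2\le C(n)\,\b(0,1)$. The height formula
\[
\sfd(x_{n+1},\Gamma)=\frac{{\sf Vol}_{n+1}(x_0,\dots,x_{n+1})}{{\sf Vol}_n(x_0,\dots,x_n)}
\]
then yields directly $\sfd(x_{n+1},\Gamma)\le C\sqrt{\b}/V_n$, and since $x_{n+1}$ was arbitrary this gives the branch $\beta\le C\sqrt{\b}/V_n$ without any case split. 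The other branch $\beta\le C V_n^{1/n}$ follows from the same height formula together with the (elementary) bound ${\sf Vol}_{n+1}(x_0,\dots,x_{n+1})\le C(n)V_n^{(n+1)/n}$, valid because every $n$-face of this simplex has vertices in $S\cap B_1(0)$ and hence $n$-volume at most $V_n$. Your treatment of the ``in particular'' clause via Proposition~\ref{prop:realizing planes} is fine.
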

\begin{proof}
	After a rescaling we can consider only the case $r=1$. We can also suppose that $V_n>0,$ otherwise there is nothing to prove. 	
	Fix $\eps>0$. There exists a map $f: S\cap B_1(0)\to B^{\rr^n}_1(0)$ that is a $(\b(x,1)+\eps)$-isometry. Moreover there exist  $\{x_i\}_{i=0}^n\subset S \cap B_1(x)$ such that ${\sf Vol}_{n}(x_0,...,x_n)>V_n-\eps$. Let $x_{n+1} \in S\cap B_1(x)$  be arbitrary and observer that, since $f(x_0),...,f(x_n),f(x_{n+1}) \in \rr^n$, we must have ${\sf Vol}_{n+1}(f(x_0),...,f(x_n),f(x_{n+1}))=0$. From \eqref{eq:menger} and the fact that $P_{n+1}$ is locally Lipschitz, it follows that 
	\[
{\sf Vol}_{n+1}(x_0,...,x_n,\bar x)^2\le C(n)\sup_{0\le i<j\le n}||f(x_i)-f(x_j)|^2-|x_i-x_j|^2|\le4C(n)(\b(x,1)+\eps). 
\]
Therefore, denoted by $\Gamma$ the $n$-dimensional plane spanned by $x_0,...,x_n$, it holds
\[
\sfd(x_{n+1},\Gamma)=\frac{{\sf Vol}_{n+1}(x_0,...,x_n,x_{n+1})}{{\sf Vol}_n(x_0,...,x_n)}\le C(n)\frac{\sqrt{\b(x,1)+\eps}}{V_n-\eps}.
\]
Moreover it is clear that there exists a constant $C'(n)>0$ such that ${\sf Vol}_{n+1}(x_0,...,x_n,x_{n+1})\le C'V_n^{\frac{n+1}{n}}$.   From the arbitrariness of $x_{n+1}\in S\cap B_1(x)$ and $\eps>0$ the conclusion follows.
\end{proof}

\end{document}